\theoremstyle{plain}
\newtheorem{thm}{Theorem}[section]
\theoremstyle{definition}
\newtheorem{defn}{Definition}[section]
\newtheorem{exmp}{Example}[section]
\theoremstyle{remark}
\newtheorem*{rem}{Remark}
\def\maxwidth{ %
  \ifdim\Gin@nat@width>\linewidth
    \linewidth
  \else
    \Gin@nat@width
  \fi
}
\definecolor{fgcolor}{rgb}{0.345, 0.345, 0.345}
\definecolor{shadecolor}{rgb}{.97, .97, .97}
\definecolor{messagecolor}{rgb}{0, 0, 0}
\definecolor{warningcolor}{rgb}{1, 0, 1}
\definecolor{errorcolor}{rgb}{1, 0, 0}
\title{Tutorial: Deriving The Efficient Influence Curve For Large Models}
\author{Jonathan Levy}
\begin{document}

\begin{titlepage}

\maketitle
\begin{abstract}
This paper aims to provide a tutorial for upper level undergraduate and graduate students in statistics, biostatistics and epidemiology on deriving the efficient influence function for non-parametric and semi-parametric models.  The author will build on previously known efficiency theory and provide a useful identity and formulaic technique only relying on the basics of integration, which are self-contained in this tutorial and can be used in most any setting one might encounter in practice.  The paper provides many examples of such derivations for well-known efficient influence functions as well as for new parameters of interest.  The efficient influence function remains a central object for constructing efficient estimators for large models, such as the one-step estimator and the targeted maximum likelihood estimator.  We will not touch upon these estimators at all but readers familiar with these estimators might find this tutorial of particular use.  We will also briefly relate the more general ideas for large model efficiency theory to more familiar parametric theory.  

\end{abstract}
\end{titlepage}

\section{Background Information and Motivation}
This paper aims to provide the reader with a useful tutorial on how to derive efficient influence functions for non-parametric and semi-parametric models, while providing some necessary background for the reader so as to understand the core concepts involved in the process.  It is the author's aim that this paper unifies the derivation procedure for a very broad class of parameters in a simple way so as to draw the broader statistics community into embracing statistical techniques for large models.  It is also the aim of this paper for it to be self-contained, only indicating places where the reader might explore concepts in more detail but such exploration is not at all needed.  The author also feels it is important to connect some basic ideas of parametric statistics familiar to the reader to the more general theory for larger models.  

\section{The Hilbert Space}
\label{hilbert}
The efficient influence function can be seen as an element of a Hilbert space, which generalizes familiar geometrical properties to allow for infinite dimensional spaces.  

\begin{defn}
A Hilbert space, $\mathcal{H}$, has an inner product, denoted by $\langle \cdot, \cdot \rangle$, which takes as arguments any two elements of $\mathcal{H}$ and obeys the following:
\begin{enumerate}
\item
$\langle x, y \rangle = \overline{\langle y, x \rangle}$ where $\overline{a}$ is the complex conjugate of $a$.  However, for this paper, we are only considering real-valued inner products, so $x$ and $y$ are simply reversible in the inner product as in, $\langle x, y \rangle = \langle y, x \rangle$.  
\item 
$\langle x + z, y \rangle = \langle x, y \rangle + \langle z, y \rangle$ 
\item
The norm $\Vert \cdot \Vert$ of any $x \in \mathcal{H}$ is given by $\langle x, x\rangle = \Vert x \Vert^2$.  The norm must obey the natural notion of distance as mathematically defined here:
\begin{enumerate}
\item
$\Vert x + y \Vert \leq \Vert x \Vert + \Vert y \Vert$, the triangle inequality
\item
$\vert a \vert \Vert x \Vert =  \Vert a x \Vert$
\item
$ \Vert x \Vert =  0 \iff x = 0$
\end{enumerate}
\item
$a \langle x, y \rangle = \langle ax, y \rangle= \langle x, ay \rangle$ for scalar $a$. 
\end{enumerate}
\end{defn}
A Hilbert space is complete with respect to the norm, which means the space includes the limit of all cauchy sequences under the norm.  Cauchy sequences are sequences where the elements get closer and closer together, which is a fundamental distinction but more fundamental than we need in order to proceed with clarity.  For more background on the basics of Hilbert spaces, the reader may consult Folland,1999.  Here are two examples of Hilbert spaces, the second of which forms the basis of this paper (no pun intended):

\begin{exmp}{$\mathbb{R}^2$}

The points on the cartesian plane form a 2-dimensional Hilbert space and it is equipped with an inner product more familiarly known as the dot product.  If $\mathbf{x} = (x_1, x_2)$ and $\mathbf{y} = (y_1, y_2)$, then $\langle \mathbf{x}, \mathbf{y} \rangle = \mathbf{x} \cdot \mathbf{y} = x_1y_1 + x_2 y_2$.  
\end{exmp}

This example is sufficient to convey a few of the key geometrical properties of Hilbert spaces we will use.
\begin{itemize}
\item
\textbf{Orthogonality:} 

If the inner product of any two elements is 0, we say they are orthogonal.  In $\mathbb{R}^2$ we can see this fits our visual notion of such.  
\item
\textbf{Unique Projection:}
We notate the projection of $(x,y)$ on the subspace, $\mathbf{X} = \{(x,0) | x \in \mathbb{R} \}$, as follows: $\prod ((x,y) \Vert \mathbf{X})$.  We see, just by regarding the shadow of (x,y) on the x-axis, that the projection is $(x,0)$ and it is unique.  We have a more general formula for projecting any vector on a subspace but this example suffices to illustrate that any projection must satisfy the following two properties:

\item
\textbf{Two Properties of Projections}
\begin{enumerate}
\item
The projected item must be in the space onto which it is projected: (x, 0) is in $\{(x,0) | x \in \mathbb{R} \})$, which it obviously is. 
\item
The projected element minus its projection must be perpendicular to the projection.  This means the projection is the closest element in the space to the projected element.  This is easy to verify for this basic example because $(x,y)-(x,0) = (0,y)$ and $(0,y) \perp (x,0)$ because the dot product $\langle (0,y), (x,0) \rangle = (0,y) \cdot (x,0) = 0$.  We can see in the plane that these two vectors are perpendicular.  Such a geometrical interpretation of projection also follows for infinite dimensional Hilbert spaces.  

\begin{figure}[H]
\caption{Viewing Pts As Vectors in Hilbert space $\mathbb{R}^2$ under dot product}
\includegraphics[width=1\linewidth]{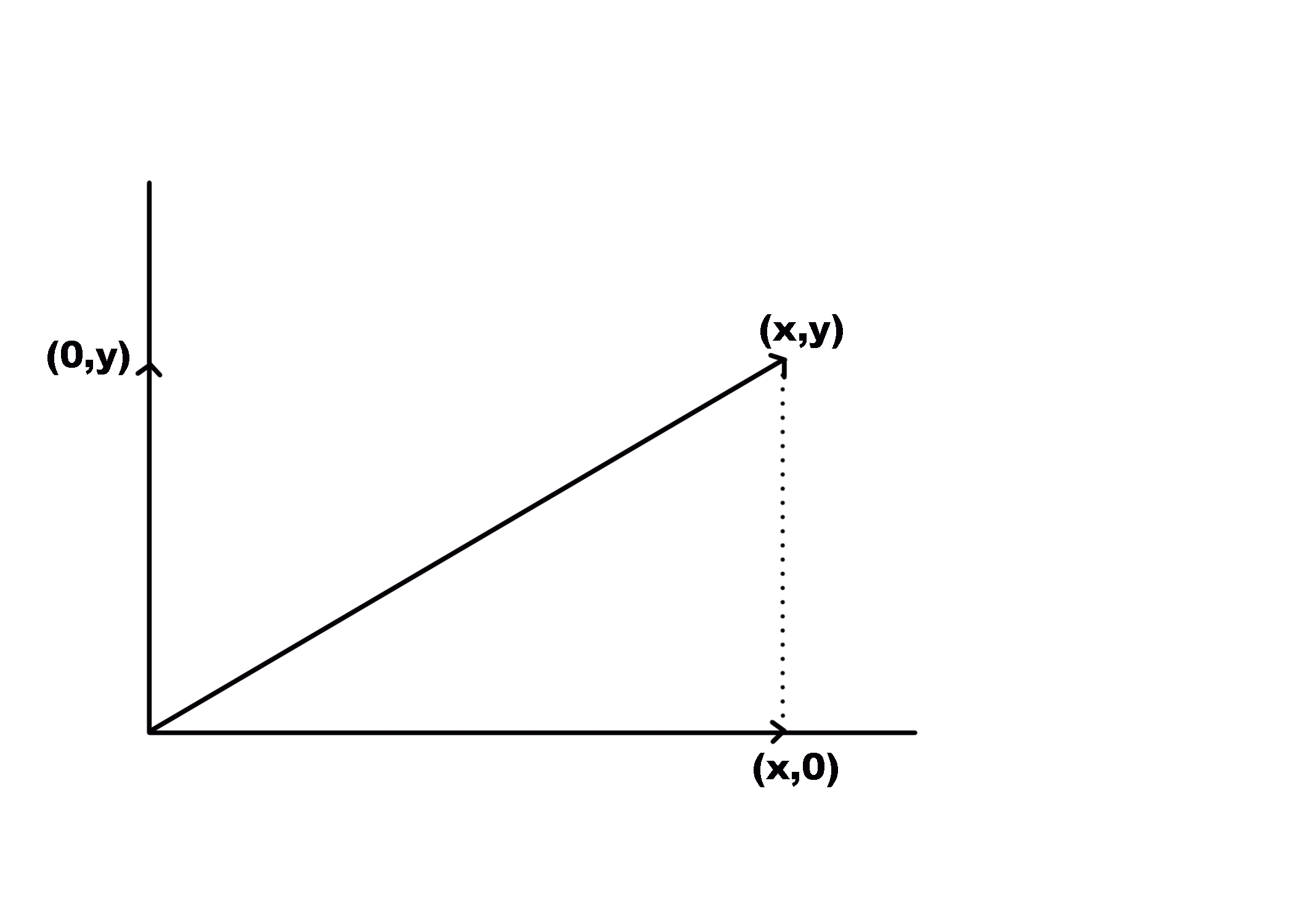}
\end{figure}

\end{enumerate}

\item
\textbf{Direct Sum Decomposition:} 
Coming from the fact we have unique projections, we can decompose $\mathbb{R}^2$ into 2 orthogonal subspaces, $\mathbf{X} \oplus \mathbf{Y} = \{(x,0) | x \in \mathbb{R} \} \oplus \{(0,y) | y \in \mathbb{R} \}$.  Any $(x,y) \in \mathbb{R}^2$ can be written as unique sum of projections, $\prod((x,y) \Vert \mathbf{X}) + \prod((x,y) \Vert \mathbf{Y})$.   More generally, if $\mathbf{Z}$ were any subspace such as any arbitrary line through the origin, then its orthogonal complement, i.e., the perpendicular line through the origin, $\mathbf{Z}^\perp$ would also decompose $\mathbb{R}^2$  as  $\mathbf{Z} \oplus \mathbf{Z}^\perp$ and $(x,y) = \prod((x,y) \Vert \mathbf{Z}) + \prod((x,y) \Vert \mathbf{Z}^\perp)$.  If a Hilbert space has direct sum decomposition, $\mathcal{H} =  \mathbf{H_1} \oplus \mathbf{H_2} \oplus ...\oplus \mathbf{H_m}$, then all $h \in \mathcal{H}$ can be written as the unique sum $h =  \prod(h \Vert \mathbf{H_1}) + \prod(h \Vert \mathbf{H_2}) +...+ \prod(h \Vert \mathbf{H_m})$.  

\end{itemize}

\begin{exmp}{$L^2_0(P)$}

$L^2_0(P)$ is the hilbert space of mean 0 functions of finite variance with respect to $P$, i.e. for all $f \in L^2_0(P)$, $\mathbb{E}_P f(O) = 0$ and $\mathbb{E}_Pf(0)^2 < \infty$.  The inner product of two elements, $f$ and $g$ in  $L^2_0(P)$ is defined as $\langle f, g \rangle = \mathbb{E}_P[f(O)g(O)]$.  Thus two elements are considered orthogonal if their covariance is 0.  $L^2_0(P)$ is an infinite dimensional Hilbert space we will focus upon exclusively for this tutorial.  The reader can consult Folland, 1999, section 5.5 for more detail on Hilbert spaces.  
\end{exmp}

\section{A Note on Integration and Measure Theory}
\label{integration}
A measure, $\nu$, is a non-negative mapping defined on a $\sigma$-algebra, which we will consider as a set of subsets from a larger set.  The trio, consisting of larger set, $\sigma$-algebra and measure, define a \textbf{measure space}, denoted by $(\mathcal{X}, \mathcal{A}, \nu)$.  Let the larger set $\mathcal{X} = \mathbb{R}$ and let $\nu$ be the Lebesgue measure, which simply measures the length of any interval, $(a,b)$, i.e., $\nu((a,b)) = b-a$.  This is the measure used for introductory integration.  The $\sigma$-algebra we consider for Lebesgue measure is naturally the borel $\sigma$-algebra, $\mathcal{B}$, which is the set of all countable unions and intersections of intervals of the form $(a,b)$.  We could have also used closed or half-open intervals to generate $\mathcal{B}$ as well.  $\mathcal{B}$ also includes singleton sets of points because $\{a\} = \cap_{i=1}^{\infty} (a -1/i, a+1/i)$, i.e., the countable intersection of ever smaller open intervals about $a$.  \\

Naturally we should have the following equivalence: $\nu(\{a\}) = \nu\left(\cap_{i=1}^{\infty} (a -1/i, a+1/i)\right)= \underset{i \rightarrow \infty}{\lim} \nu(a -1/i, a+1/i) = \underset{i \rightarrow \infty}{\lim} 2/i = 0$, since the set $\{a\}$ has length 0.  In order that the measure of a limit of nested intersections is a limit of the measures of the sets (and likewise for nested unions), we could not have included all sets of real numbers in $\mathcal{A}$.  Though this fact is surprising and intriguing in its own right, we need not delve into it further.  For more about the necessity of $\sigma$-algebras and a complete mathematical construction of measures, the interested reader may consult Folland, 1999, chapters 1 and 2.  \\

The examples below cover the situations we will encounter, essentially binary or continuous conditional distributions.  

\begin{enumerate}
\item
\textbf{Counting measure}: Let $\mathcal{X} = \{0,1\}$ and consider $\sigma$-algebra $\mathcal{A} = \{ \{0\}, \{1\}, \{0,1\}\}$.  The "measure space", $(\mathcal{X}, \mathcal{A}, \nu)$, is thusly defined via $\nu(\{0\}) = \nu(\{1\}) = 1$ and $\nu(\{0,1\}) = 2$.  For $\mathcal{X} = \mathbb{N}$, the counting numbers and $\mathcal{A}$ the set of all subsets of $\mathbb{N}$, the counting measure does the same thing in that it counts the number of elements in a set.   
\item
\textbf{Lebesgue measure, 2-d:}
We might have $\nu$ on the $\sigma$-algebra generated by countable unions and intersections of all boxes in $\mathbb{R}^2$ as in 2-d college calculus.  Here  $\mathcal{B}$ is generated by countable unions and intersections of boxes on the plane and the measure space $(\mathbb{R}^2, \mathcal{B}, \nu)$ is defined by $\nu$ giving each 2-d box a measure equal to its area.  
\item
\textbf{Lebesgue with counting measure:}
Let $\mathcal{X} = \mathbb{R} \cup \{0,1\}$ and $\mathcal{A} = $ all sets generated by countable unions and intersections of sets of the form $\{(a,b), z)\}$ where $z$ can be 0 or 1.  In this case, $\nu$ puts a weight of $b-a$ on each of these sets, which will define Lebesgue measure isolated to when $z=1$ or $z=0$.  We might do the same, using $\mathcal{X} = \mathbb{R}^2 \cup \{0,1\}$ where $\nu$ maps each 2-d box to its area or the equivalent for $\mathcal{X} = \mathbb{R}^d \cup \{0,1\}$.  

\end{enumerate}  

\subsubsection{Integral Notation}
$\nu$ is said to \textbf{dominate} $P$ ($P << \nu$) or is a \textbf{dominating} measure of $P$ if whenever $\nu(A)$ is 0, so is $P(A)$ for two measure spaces, $(\mathcal{X}, \mathcal{A}, \nu)$ and $(\mathcal{X}, \mathcal{A}, P)$.  This leads to $P$ having a unique Radon-Nikodym derivative \parencite{folland} of $P$ with respect to $\nu$, otherwise known as the density of $P$, notated with the lowercase, $p$.  For a measure space, $(\mathcal{X}, \mathcal{A}, P)$, we write, for a set $A \in \mathcal{A}$, $P(A) = \int_A p(x)d\nu(x)$, which is sometimes written as $P(A) = \int_A dP(x)$.  One might connect this with our intro calculus notation for a continuous 1-dimensional random variable, $X$, and Lebesgue measure, $\nu$, where $\frac{dP}{d\nu}(x) = \frac{dP}{dx}(x) = p(x)$, a standard derivative.  Then we would have $P(A) = \int_A \frac{dP}{dx}(x) dx$ as in the fundamental theorem of calculus.  However, the intro calculus notion of derivative and integral breaks down if random variable $X$ is discrete, say, or a combination of discrete and continuous variables, so the Radon-Nikodym derivative is much more general and less confining.  We will always use the symbol, $\nu$, as the dominating measure in this tutorial.  \\

It is best to illustrate, via some basic examples, the computational fluidity measure theory provides.  We will use these basic ideas throughout the tutorial:  

\begin{enumerate}
\item
Let $Y$ be the outcome with continuous conditional distribution, $P_Y(Y \mid X)$ for a random variable, $X$.  The dominating measure of $P_Y(Y \mid X)$, will be Lebesque measure, $\nu$, and the density is written $p_Y(y \mid x)$.  The mean of $Y$ given $X$ is given by $\mathbb{E}[Y \mid X] $ which we notate as $\int y p_Y(y \mid X) d\nu(y)=\int y p_Y(y \mid X)dy$ as we might be most familiar from intro calculus.  Here we think of integrating as a limiting process of finer and finer reimann sums.
\item
Let $Y$ be a binary outcome conditional on $X$ with binary conditional distribution, $P_Y(Y \mid X)$.  The dominating measure of $P_Y$ will be the counting measure, $\nu$. The mean of $Y$ given $X$ is given by $\int y p_Y(y \mid X) d\nu(y)= 1 p_Y(1 \mid X)d\nu(1) + 0 p_Y(0 \mid X)d\nu(0) = p_Y(1 \mid X)$ as we expect for a binary.  Notice, $d\nu(y)$ is the same as $v(\{y\})$ = 1 for $y$ = 0 or 1.  In other words, for the counting measure $d\nu$ and $\nu$ are interchangeable for a set of one element and the integral wrt a counting measure is just a sum.  That is, for a discrete random variable, $Y$, taking values $\{y_i\}_{i=1}^{m}$, where $m$ might be infinite, as in a Poisson distribution, we can write the conditional mean of $Y \mid X$ as $\int y p_Y(y \mid X) d\nu(y) = \sum_{i=1}^m  y_i p_Y(y_i \mid X) d\nu(y_i)$ where $d\nu(y_i) = 1 = \nu(y_i)$.  In other words, this sum is as fine-grain as we can get and hence, is equivalent to the integral. 

\item 

\textbf{Multiple Integrals}

Consider random variable $O = (X,Y)\sim P$ with density, $p$.  The density factors as $p(o) = p_Y(y \mid x)p_X(x)$, where  $p_Y$ and $p_X$ are the conditional densities.  Consider function $f$ defined by $f(x,y)$ for some formula basic formula like $exp(x + y)$ or a polynomial.  
\begin{align*}
\mathbb{E}f(X,Y)& = \int f(x,y) p(x,y)d\nu(x,y) \\
& = \int f(x,y) p_Y(y \mid x)p_X(x)d\nu(x,y) \\
& \text{note the equivalence with a double integral: we will use this frequently}\\
& = \int \underbrace{\int f(x,y) p_Y(y \mid x) d\nu(y)}_{\text{can integrate here}}p_X(x)d\nu(x)\\
& =\underbrace{ \int \int f(x,y) p_Y(y \mid x) d\nu(y)p_X(x)d\nu(x)}_{\text{can integrate outside first wrt x}}
\end{align*}

If $Y$ is, say, binary and $X$ is continuous or for joint distribution of X and Y, we technically cannot use the same symbol, $\nu$, for all of their dominating measures, but we will not worry about that and abuse the notation for convenience.  This doesn't affect our computation in that for the double integral we will understand which dominating measure (for our purposes either counting measure or Lebesgue measure) we are considering by the variable we are integrating with respect to.  It is also notable that whether we integrate the expression via the inner integral then the outer or vice-versa, both come out the same as integrating the single integral directly.  This is the substance of the \textbf{fubini-tonelli} theorem \parencite{folland}, which the reader may look into further.  

\begin{rem}
Computations in this tutorial will be with respect to densities of single variables and only involve the counting measure as the dominating measure.
\end{rem} 

\item
\textbf{Common tricks we will use:}
\label{tricks}
Consider the previous item with continuous conditional distribution of $Y$ given $X$ and $X$ binary.  
\begin{align*}
& \int y p_Y(y \mid 1) d\nu(y) \\
=&\int \int y p_Y(y \mid x) d\nu(y) \frac{x}{p_X(x)}p_X(x)d\nu(x)
\end{align*}  
\begin{align*}
& \int y (p_Y(y \mid 1) - p_Y(y \mid 0))d\nu(y) \\
=&\int \int y p_Y(y \mid x) d\nu(y) \frac{2x-1}{p_X(x)}p_X(x)d\nu(x)
\end{align*}  

The reader may verify these facts.
\item
\textbf{Instructive Advertisement for Measure Theory:}

Though we never need to consider this case, it is instructive for
the reader so as to understand the nice generality afforded by measure
theory in integrating as well as the notion of a unique density (the radon-nikodym derivative) corresponding to a probability distribution and its dominating measure.  This takes us beyond what we need for our computations but will provide confidence in using the notation.  Let the distribution $Y$ be given by the distribution function, 

$F(y)=\begin{cases}
y/2 & 0\leq y<1/2\\
y/2+1/2 & 1/2\leq y \leq1
\end{cases}$ 

Notice, $F$ is not continuous.
We have thusly defined a measure space, $([0,1],\mathcal{B}_{[0,1]},P)$ where
$P((a,b)) = \frac{b-a}{2} + \frac{1}{2}\mathbb{I}(1/2 \in (a,b))$.  Say our dominating measure is $\nu((a,b))=b-a+\mathbb{I}(1/2\in(a,b)).$
Then our unique radon-nikodym derivative is the density $p(y)=1/2$ for $0\leq y\leq1$ . 

To see this, notice for the latter density that we have:
\begin{align*}
\int p(x)d\nu(x) & = \int_{[0,1/2)} p(x)d\nu(x)+\int_{\{1/2\}}p(x)d\nu(x)+\int_{(1/2,1]}p(x)d\nu(x)\\
&=1/4 + p(1/2)\times \nu(\{1/2\})+1/4 = 1
\end{align*}

Hence we are forced into defining the density so that $p(1/2) = 1/2$ for the total probability to be 1.  We also see ``area under the density'' interpretation
for probability of a set fails because the area under the density
is 1/2, not 1, if we use Lebesgue measure.

 If $\nu((a,b))=b-a+\frac{1}{2}\times I(1/2\in(a,b))$ then  
 
 $p(y)=\begin{cases}
1/2 & 0\leq y<1/2\\
1 & y=1/2\\
1/2 & 1/2<y\leq1
\end{cases}$. 

To see this, notice for the latter density we have:

\begin{align*}
\int p(x)d\nu(x) & = \int_{[0,1/2)} p(x)d\nu(x)+\int_{\{1/2\}}p(x)d\nu(x)+\int_{(1/2,1]}p(x)d\nu(x)\\
&=1/4 + p(1/2)\times \nu(\{1/2\})+1/4 = 1
\end{align*}

Hence we are forced into defining the density so that $p(1/2) = 1$.   Thus for any probability measure $P$ and accompanying dominating
measure, $\nu,$ we have a unique radon-nikodym derivative we can use for integrating.  The general result is proven in Folland, 1999.  

\end{enumerate} 

\section{Tangent Spaces and Factorization of Densities}
\label{generalapproach}
Now that we have taken care of some necessary notational considerations we are ready to illustrate the general technique of deriving efficient influence curves.  We therefore discuss some important objects in efficiency theory.  

\subsubsection{Tangent Space for Nonparametric Model} 
First, we consider the model, $\mathcal{M}$, to be the set of all possible distributions for our true distribution.  Since we assume nothing about this set of models we will call it non-parametric.  We will consider observed data, which for a single observation is written as, $O\in\mathbb{R}^{d}$, and $O\sim P\in\mathcal{M}$.  The density of $P$ factors as follows:
\[
p(o)=\prod_{i=1}^{d}p_{O_{i}}(o_{i}\mid\bar{o}_{i-1})
\]
where $o=\bar{o}_{d}=(o_{d},...,o_{1})$, where the reader may note that we order the variables moving backward in time from left to right, when we write them.  We will generally establish a time ordering of variables and use the subscript notation to represent the conditional densities.  So $p_{O_i}$ is the conditional density of $o_i$ given the previous variables, $\bar{o}_{i-1}$.  

Pulling from van der Vaart, 2001, we define a path through $P$ as a 1-dimensional submodel that passes through $P$ at $\epsilon=0$ in the direction, $S$.  
\[
\{P_\epsilon  \in \mathcal{M}, p_\epsilon=(1+\epsilon S)p \text{ s.t. } \int S(o)p(o)d\nu(o)=0,\int S^2(o)p(o)d\nu(o) <\infty \ \text{ and } P_{\epsilon = 0} = P\}
\]

The tangent space, $T$, at a distribution, $P$, is the closure in the $L^2_0(P)$ norm of the set of scores, $S$ for the all the paths through $P$.  This turns out to be the entirety of the Hilbert space $L^2_0(P)$ since $L^2_0(P)$ is already complete.  We write: 
\[
T = \overline{\left\{ S \vert\mathbb{E}_PS(O)=0,\mathbb{E}_PS(O)^2<\infty\right\}}=L^2_0(P)
\]

where the overbar represents the closure of the set.  
\begin{enumerate}
\item 
The reader may quickly verify that for a given submodel, $S = \frac{d}{d\epsilon} log p_\epsilon \biggr\vert_{\epsilon=0}$.  Thus scores retain the intuitive notion of derivative of log likelihood as with parametric models.  The only difference is here, we have infinitely many score directions that span an infinite dimensional space. 
\item
Another useful observation is that every element of the submodel in a non-parametric model for our d-dimensional data, $O$, has a density that also factors as follows:  $p_\epsilon(o)=\prod_{i=1}^{d}p_{O_{i}, \epsilon}(o_{i}\mid\bar{o}_{i-1})$, where $\bar{o}_{i-1} = (o_{i-1},...,o_1)$ where $p_{O_{i}, \epsilon}(o_{i}\mid\bar{o}_{i-1}) = p_{O_{i}}(o_{i}\mid\bar{o}_{i-1})$ at $\epsilon = 0$.  This implies

\begin{align*}
S(o) &= \sum_{i=1}^d \frac{d}{d\epsilon} \log p_{O_i,\epsilon}(o_i \mid \bar{o}_{i-1}) \biggr\vert_{\epsilon=0}\\  
& = \sum_{i=1}^d S_{O_i}(\bar{o_i}) 
\end{align*}

and the reader may also verify $S_{O_i}$ and $S_{O_j}$ have covariance 0, i.e., $S_{O_i} \perp S_{O_j}$ in $L^2_0(P)$ for $i \neq j$.  

\item
$S_{O_i} \in T_{O_i}=\overline{\{ g \mid E[g(O) \mid O_{i-1}] = 0, E[g^2(O)] \leq \infty \}}$ and $T_{O_i}$ forms a subspace of $T$.  \textbf{EXERCISE:} The reader may verify that $T_{O_i} \perp T_{O_j}$ for $i \neq j$.  That is, all elements of $T_{O_i}$ have covariance 0 with those of $T_{O_j}$.    

\item
\label{projections}
The projection of $S$ on $T_{O_i}$ is given by $\prod\left(S \mid T_{O_i}\right) = \mathbb{E} [S(O) \mid \bar{O}_{i}] - \mathbb{E} [S(O) \mid \bar{O}_{i-1}]$.  
\textbf{EXERCISE:}The reader may verify that this is indeed a projection by verifying the projection is in the set upon which it is projected and that $\left(S - \prod\left(S \mid T_{O_i}\right)\right) \perp \prod\left(S \mid T_{O_i}\right)$, i.e. has covariance 0 with respect to $P$.  This exercise is good preparation for the rest of the tutorial. 

\item
$T = T_{O_d} \oplus ... \oplus T_{O_1}$.  Any score, $S$, is thusly a unique sum of its projections on the $d$ tangent subspaces and those projections are given by $S_{O_i} = \frac{d}{d\epsilon} \log p_{O_i,\epsilon}(o_i \mid \bar{o}_{i-1}) \biggr\vert_{\epsilon=0}$.  

\end{enumerate}

We thus have the following convenient identity we will call upon for all derivations of efficient influence curves.  Noting the introductory calculus fact by the chain rule, $\frac{d}{dx}\log f(x) = \frac{\frac{df}{dx}(x)}{f(x)}$, we arrive at the following identity:

\subsubsection{A Key Identity} 
\begin{align}
 \frac{d}{d\epsilon} p_{O_i, \epsilon}(o_i \mid \bar{o}_{i-1}) \biggr\vert_{\epsilon=0}  &=  p_{O_i}(o_i \mid \bar{o}_{i-1}) \frac{d}{d\epsilon} \log p_{O_i, \epsilon}(o_i \mid \bar{o}_{i-1}) \biggr\vert_{\epsilon=0} \nonumber \\
 &=  S_{O_i}(o)p_{O_i}(o_i \mid \bar{o}_{i-1}) \nonumber \\
 \implies  \frac{d}{d\epsilon} p_{O_i, \epsilon}(o_i \mid \bar{o}_{i-1}) \biggr\vert_{\epsilon=0} & =\left( \mathbb{E} [S(O) \mid \bar{O}_{i} = \bar{o}_{i}] - \mathbb{E} [S(O) \mid \bar{O}_{i-1}=\bar{o}_{i-1}] \right) p_{O_i}(o_i \mid \bar{o}_{i-1}) \label{eq:1}
\end{align}

\subsubsection{Parametric connection}
Consider a parametric model containing elements $P_\theta$ for 1-dimensional $\theta$.  Let $\gamma$ be differentiable with respect to $\epsilon$ at $\epsilon = 0$ and $\gamma(0) = \theta$.  Let $r = \gamma^\prime(0)$ and regard the path through $P_\theta$ defined by $P_{\gamma(\epsilon)}$ .  If the likelihood, $p_\theta$ is differentiable wrt $\theta$, we have for any given $o$:

\begin{align*}
&\text{taylor series } \implies \text{ for small } \epsilon\\
& p_{\gamma(\epsilon)}(o) = p_{\theta+r\epsilon + O(\epsilon^2)}(o) = p_\theta(o) + \frac{d p_\theta}{d\theta}(o) r\epsilon +O(\epsilon^2) \approx p_\theta(o)\biggr(1 + \epsilon r\frac{d}{d\theta}\log p_\theta (o)
\biggr)
\end{align*}

We can see the score as the mean 0 function next to the $\epsilon$ similarly to the paths for the non-parametric case.  Such is really a result of the chain rule where we have $\frac{d}{d\epsilon}\log p_{\gamma(\epsilon)}\biggr\vert_{\epsilon=0} = r \frac{d}{d\theta}\log p_\theta = S_\theta$, the familiar "derivative of log-likelihood" score we know from parametric statistics.  Our scores form a 1-dimensional tangent space,  $\{ r \frac{d}{d\theta} \log p_\theta \text{ s.t. } r \in \mathbb{R}\}$, a subspace of $L^2_0(P_\theta)$, assuming $ r \frac{d}{d\theta} \log p_\theta$ is of finite variance. The reader may verify the fact $r \frac{d}{d\theta} \log p_\theta$ has mean 0 with respect to $P_\theta$.  Very similar reasoning follows for $k$-dimensional parametric models, where we will have a $k$-dimensional tangent space as a subspace of $L^2_0(P_\theta)$, $\{ r^T \nabla_\theta \log p_\theta \text{ s.t. } r \in \mathbb{R}^k\}$, that is, all linear combinations of the $k$ partial derivatives.  

\subsubsection{The Efficient Influence Curve} 
Consider a parameter mapping on the model, $\mathcal{M}$, which, for simplicity, we will consider as a mapping to the reals given by $\Psi(P)$.  We can borrow from van der Vaart, 2000, who defines the pathwise derivative as a continuous linear map from $T$ to the reals given by
\begin{equation} 
\underset{\epsilon\rightarrow 0}{lim}\left(\frac{\Psi(P_\epsilon)-\Psi(P)}{\epsilon}\right)\longrightarrow \dot{\Psi}_{P}(S) \label{derivative}
\end{equation}
We note to the reader, we imply a direction, $S$, when we write $P_{e}$, which has density $p(1+\epsilon S)$, but generally leave it off the notation as understood.\\

By the riesz representation theorem \parencite{riesz} for Hilbert Spaces, if the functional defined in (\ref{derivative}) is a bounded and linear functional on the tangent space, $T$, it can be written in the form of an inner product $\langle D^*_\Psi(P),S \rangle_{L^2_0(P)}=\int D^*_\Psi(P)(o)S(o)p(o)d\nu(o) $ where $D^*_\Psi(P)$ is a unique element of $T$, which we call the canonical gradient or \textbf{efficient influence curve}.  The efficient influence curve is defined at a distribution ,$P$, according to the parameter mapping, $\Psi$, and is a function of the data, $O$.\\

It is possible to have a gradient not in $T$ if $T$ is a proper subspace $L^2_0(P)$, i.e., it is possible to have a $D(P) \in L^2_0(P)$ such that for all $S \in T$, $\dot{\Psi}_{P}(S) = \langle D, S \rangle$.  

\textbf{EXERCISE:} Prove this element has a larger variance than $D^*(P)$ by using the basic properties of inner products and the uniqueness of $D^*(P)$ in $T$.  Because all regular asymptotically linear estimators have a corresponding gradient, this proves the efficient influence curve has a variance that is the general cramer-rao lower bound for any regular asymptotically linear estimator \parencite{Vaart:2000aa}. \\

\subsubsection{Parametric connection}
Again, returning to our parametric model, define the parameter mapping as $\Psi(P_{\gamma(\epsilon)}) = \gamma(\epsilon)$, for which we let $\gamma^\prime(0) = r$, i.e., assuming differentiability of the parameter mapping in the ordinary sense of introductory calculus.  Now we can notice, using the $L^2_0(P)$ norm, $\Vert f \Vert^2 = \int f(o)^2p_\theta(o) d\nu(o)$, which implies the following:
\begin{align*}
r & = \frac{\int  r (\frac{d}{d\theta}\log p_\theta (o))^2 p_\theta(o) d\nu(o)}{\Vert \frac{d}{d\theta}\log p_\theta \Vert^2}\\
& = \int \frac{\frac{d}{d\theta}\log p_\theta(o)}{\Vert \frac{d}{d\theta}\log p_\theta \Vert^2}  \underbrace{r\frac{d}{d\theta}\log p_\theta (o)}_{\text{the score }S_\theta} p_\theta(o) d\nu(o)\\
& = \int \frac{\frac{d}{d\theta}\log p_\theta(o)}{\Vert \frac{d}{d\theta}\log p_\theta \Vert^2} S_\theta(o) d\nu(o)\\
& = \biggr\langle \frac{\frac{d}{d\theta}\log p_\theta}{\Vert \frac{d}{d\theta}\log p_\theta \Vert^2} ,S_\theta \biggr\rangle
\end{align*}
And thus the efficient influence curve is given by $\frac{\frac{d}{d\theta}\log p_\theta(o)}{\Vert \frac{d}{d\theta}\log p_\theta \Vert^2}$, whose variance we can see is the inverse of the Fisher Information, $1/\Vert \frac{d}{d\theta}\log p_\theta \Vert^2$, which we know to be the cramer-rao lower bound and attainable via maximum likelihood estimation, under regularity assumptions.  

\begin{rem}
For a note on regularity, see Kale, 1985, where Hodges classic example of irregularity is discussed. \nocite{Kale} 
 \end{rem}
 
\subsubsection{The General Technique}
The general approach to derive the efficient influence curve for a given parameter will be to compute the derivative of the parameter mapping along a path, i.e. compute 
$\dot{\Psi}_{P}(S)$ above via taking a derivative and write it as an inner product with the score, $S$, via use of the key identity (\ref{eq:1}).  Since this functional will be bounded and linear for the parameters we encounter, then by the previous paragraph, this will tell us exactly what the efficient influence curve is.  Precisely the efficient influence curve will be the function with the score, $S$, in the inner product, which means the efficient influence curve will be the function multiplied by the score in the integral with respect to $P$.  We will start with easy examples and grow progressively more involved, including influence curves for new parameters derived by the author.  \\

\subsection{Example 1: $ \int F(x)^2 dx$}
Let $\Psi(P) = \int_a^b F(x)^2 dx$, the parameter mapping for $P \in \mathcal{M}$, the set of continuous distributions, where $F$ is the CDF.  

\begin{align*}
\frac{d}{de} \Psi(P_e) \biggr\vert_{e=0} &= \frac{d}{de} \int_a^b \left(\int_0^x p_e(o)do\right)^2  dx \biggr\vert_{e=0}\\
\overset{\text{chain rule}}{=}&  \int_a^b 2 \int \mathbb{I}(o \leq x) p(o) do \frac{d}{de} \int \mathbb{I}(o \leq x) p_e(o)do\biggr\vert_{e=0} dx\\
& \overset{(\ref{eq:1})}{=} \int_a^b 2 F(x) \int \mathbb{I}(o \leq x)(\mathbb{E}[S(O) \mid o] - \mathbb{E}S(O))p(o) do dx\\
& = \int_a^b 2 F(x) \int \mathbb{I}(o \leq x)\mathbb{E} [S(O) \mid o]p(o) do dx \\
& - \int_a^b 2 F(x) \int \mathbb{I}(o \leq x) \mathbb{E}S(O)p(o) do dx \\
&\text{reverse integration order to write as an integral wrt the density, } p\\
& = \int \int_a^b 2 F(x) \mathbb{I}(o \leq x) dx S(o) p(o) do -  \mathbb{E}[S(O) \int_a^b 2F(x)^2 dx]\\
& = \mathbb{E}[S(O) \int_a^b 2F(x)( \mathbb{I}(O \leq x) - F(x))dx]\\
& = \biggr\langle S , \int_a^b 2F(x)( \mathbb{I}(\cdot \leq x) - F(x)) \biggr\rangle
\end{align*}

So the efficient IC is given by $D^*(P)(O) =  2\int_a^b F(x)( \mathbb{I}(O \leq x) - F(x))dx$ 

\subsection{Example 2: Treatment Specific Mean}
\label{TSM}
This influence curve is very well-known and can be derived in many ways but it will serve as a good flagship example for the general technique.  

\subsubsection*{STEP 1}
Define the data and distribution as well as the factoring: $O = (Y,A,W) \sim P$.  $P$ has density, $p(o) = p_Y(y \mid a,w)p_A(a \mid w) p_W(w)$.  We will assume $A$ is binary.  We also employ the notation, $\bar{Q}(A,W) = \mathbb{E}[Y \mid A, W]$.  
\subsubsection*{STEP 2}

Define the parameter as a mapping from $\mathcal{M}$ to the real numbers.  $\Psi(P) = \mathbb{E}_P[\mathbb{E}_P[Y \mid A=1, W]]$

\subsubsection*{STEP 3} 
Take derivative of the parameter mapping along a path in the score direction at $P$.  Write the derivative in terms of a derivative of $p_{Y,e}(y \mid a,w)$ and $ p_{W,e}(w)$.  Then employ (\ref{eq:1}).  We will be very thorough in our steps here.  

\begin{scriptsize}
\begin{align}
\frac{d}{de}\biggr\vert_{e=0}\Psi(P_e) & = \mathbb{E}_{P_e}[\mathbb{E}_{P_e}[Y \mid A=1, W]] \nonumber \\
& \overset{dom. convergence}{=}   \int\int y \frac{d}{de}\biggr\vert_{e=0}(p_{Y,e}(y \mid a = 1, w) d\nu(y) p_{W,e}(w))d\nu(w) \nonumber \\
=&   \int\int y \frac{d}{de}\biggr\vert_{e=0}p_{Y,e}(y \mid a = 1, w) d\nu(y) p_{W}(w)d\nu(w) + \int\int y p_{Y}(y \mid a = 1, w) d\nu(y) \frac{d}{de}\biggr\vert_{e=0} p_{W,e}(w)d\nu(w)\nonumber \\
=&  \int\underbrace{\int \int y \frac{d}{de}\biggr\vert_{e=0}p_{Y,e}(y \mid a , w) d\nu(y)\frac{ap_A(a \mid w)}{p_A(a \mid w)}  d\nu(a)}_{\text{by }section (\ref{integration}), item \ref{tricks}} p_{W}(w) d\nu(w) \label{here} \\
&+  \int\int y p_{Y}(y \mid a = 1, w) d\nu(y) \frac{d}{de}\biggr\vert_{e=0}p_{W,e}(w)d\nu(w) \label{there} 
\end{align}
\end{scriptsize}

Now (\ref{eq:1}) establishes the following identities:
\begin{footnotesize}
\begin{align}
 \frac{d}{d\epsilon} p_{Y\epsilon}(y \mid a, w)\vert_{\epsilon=0} &= \left( \mathbb{E}[S(O)\mid y,a,w] - \mathbb{E}[S(Y,A,W) \mid a,w] \right) p_{Y}(y \mid a, w) \nonumber\\
  &= \left( S(o) - \mathbb{E}[S(Y,A,W) \mid a,w] \right) p_{Y}(y \mid a, w) \label{eq:3.1} \\
 \frac{d}{d\epsilon} p_{W\epsilon}(w)\vert_{\epsilon=0}& = \left( \mathbb{E}[S(Y,A,W) \mid w] - \mathbb{E}S(Y,A,W)\right) p_{W}(w) \label{eq:3.2}
\end{align}
\end{footnotesize}

Now we continue from (\ref{here}) and (\ref{there}):

\begin{scriptsize}
\begin{align*}
\overset{(\ref{eq:3.1}) \text{ and } (\ref{eq:3.2})}{=} & \int\int\int y \biggr[\mathbb{E}S(o) - \mathbb{E}[S(O) \mid a,w]\biggr] p_Y(y \mid a,w)d\nu(y)\frac{ap_A(a \mid w)}{p_A(a \mid w)}d\nu(a) p_{W}(w) d\nu(w) \\
& + \int\int y p_{Y}(y \mid a = 1, w) d\nu(y) \biggr[\mathbb{E}[S(O) \mid w] - \mathbb{E}[S(O)]\biggr] p_W(w)d\nu(w)\\
& \text{Splitting up the first integral}: \\
= & \int\int\int y S(o)p_Y(y \mid a,w)d\nu(y)\frac{ap_A(a \mid w)}{p_A(a \mid w)}d\nu(a) p_{W}(w) d\nu(w) \\
& -   \int\int \underbrace{\int y\mathbb{E}[S(O) \mid a,w] p_Y(y \mid a,w)d\nu(y)}_{\text{integrate wrt y}}\frac{ap_A(a \mid w)}{p_A(a \mid w)}d\nu(a) p_{W}(w) d\nu(w) \\
& + \int \underbrace{\int y p_{Y}(y \mid a = 1, w) d\nu(y)}_{\text{integrate wrt y}}\biggr[\mathbb{E}[S(O) \mid w] - \mathbb{E}[S(O)] \biggr]p_W(w)d\nu(w)\\
&\text{integrate the 2nd and 3rd integrals wrt y} \\
= & \int\int\int y S(o) p_Y(y \mid a,w)d\nu(y)\frac{ap_A(a \mid w)}{p_A(a \mid w)}d\nu(a) p_{W}(w) d\nu(w) \\
& -   \int\int \bar{Q}(a,w) \mathbb{E}[S(O) \mid a, w] \frac{ap_A(a \mid w)}{p_A(a \mid w)}d\nu(a) p_{W}(w) d\nu(w) \\
& + \int \bar{Q}(1,w) \biggr[\mathbb{E}[S(O) \mid w] - \mathbb{E}[S(O)] \biggr]p_W(w)d\nu(w)\\
& \text{replacing expectations with integrals we get:}\\
= & \int\int\int y S(o)p_Y(y \mid a,w)d\nu(y) \frac{ap_A(a \mid w)}{p_A(a \mid w)} d\nu(a) p_{W}(w) d\nu(w)\\
& - \int\int \bar{Q}(a,w) \int S(o) p_Y(y\mid a, w)d\nu(y)\frac{ap_A(a \mid w)}{p_A(a \mid w)}d\nu(a) p_{W}(w) d\nu(w)\\
& + \int \bar{Q}(1,w)\int S(o) p_{YA}(y,a \mid w) d\nu(y,a)p_W(w)d\nu(w) - \int S(o) p(o)d\nu(o) \int \bar{Q}(1,w) p_W(w)d\nu(w)\\
& \text{Note the first term becomes a single integral as discussed in section \ref{integration}}\\
= & \int y S(o) \frac{a}{p_A(a \mid w)}\underbrace{p_Y(y \mid a,w)p_A(a \mid w) p_{W}(w)}_{p(o)} d\nu(o)\\
& - \int\int\int \bar{Q}(a,w) S(o) \underbrace{p_Y(y\mid a,w)) d\nu(y) \frac{ap_A(a \mid w)}{p_A(a \mid w)} d\nu(a)p_{W}(w) d\nu(w)}_{\frac{a}{p_A(a \mid w)} p(o)d\nu(y) d\nu(a) d\nu(w)}\\
& + \int \int S(o)\bar{Q}(1,w)\ \underbrace{p_{YA}(y,a \mid w) d\nu(y,a)p_W(w)d\nu(w)}_{p(o)d\nu(y,a)d\nu(w)} - \underbrace{\int S(o) p(o)d\nu(o) \int \bar{Q}(1,w) p_W(w)d\nu(w)}_{\int S(o) p(o)\Psi(P)d\nu(o)}\\
& \text{the second and third terms become single integrals (see section \ref{integration}) yielding:} \\
= & \int S(o)\frac{a}{p_A(a \mid w)}y p(o) d\nu(o) -\int S(o)\frac{a}{p_A(a \mid w)}\bar{Q}(a,w) \underbrace{p_Y(y \mid a,w)p_A(a \mid w) p_{W}(w)}_{p(o)}d\nu(o) \\
& + \int S(o) \bar{Q}(1,w)p(o)d\nu(o) - \int S(o) \Psi(P)p(o)d\nu(o) \\
= & \int S(o)\biggr[\frac{a}{p_A(a \mid w)}(y - \bar{Q}(a,w))+ \bar{Q}(1,w) - \Psi(P)\biggr]p(o)d\nu(o)
\end{align*}
\end{scriptsize}

Now we notice the last expression is an $L^2_0(P)$ inner product of the score, $S$ and the function defined by the formula:
\[
D^*(P)(O) = \frac{A}{p_A(A \mid W)}(Y - \bar{Q}(A,W))+ \bar{Q}(1,W) - \Psi(P)
\]

and $D^*(P)$ is therefore the efficient influence curve, assuming $1/p_A(a \mid w)$ does not blow up anywhere to make derivative functional unbounded.  Generally, in this tutorial we will assume such positivity violations do not happen.  

\begin{rem}
If one follows the guidelines of section \ref{integration}, the derivation takes care of itself.  One should keep one's mind's eye on making sure the full density is under the integral, meaning all factors of the likelihood, so as to have a properly defined $L^2_0(P)$ inner product. There is also the trick of multiplying by $\frac{ap_A(a \mid w)}{p_A(a \mid w)}d\nu(a)$ within the integral so as to be able to write this full density.  
\end{rem}

\subsubsection{Regarding Semi-Parametric Models With Known Treatment Mechanism}
The reader may notice she would have obtained the same influence curve if the treatment mechanism, $p_A$, had been known.  This tells the reader that, the efficient influence curve for the semi-parametric model with $p_A$ known is the same.  Our parameter mapping does not depend on the treatment mechanism, $g$, and also $T_{A}\perp T_{Y}\oplus T_{W}$ which, means our efficient influence curve will have two orthogonal components in $T_Y$ and $T_W$ respectively.  This will also be the case for the next example.  

\subsection{Example 3: Efficient Influence Curve of TE Variance, VTE}
\label{VTEIC}

Let $P \in \mathcal{M}$, non-parametric for the same data structure as in section \ref{TSM}.  Then define $b_P(W) = \mathbb{E}_P[Y \mid A = 1, W] - \mathbb{E}_P[Y \mid A = 0, W]$.  We note, this also covered in Levy, 2018 tech report on the VTE \parencite{blipvar}.  
  
\begin{thm}
Let $\Psi(P)=var_{P}(b(W))$.
\noindent The efficient influence curve for $\Psi$ at $P$ is given
by:
\begin{footnotesize}
\[
\mathbf{D^{\star}(P)(Y,A,W)=}\mathbf{\mathbf{2\left(b(W)-\mathbb{E}b(W)\right)\left(\frac{2A-1}{p_A(A\vert W)}\right)\left(Y-\bar{Q}(A,W)\right)+\left(\mathbf{b(W)}-\mathbb{E}b\right)^{2}-\varPsi(P)}}
\]
\end{footnotesize}
where $\bar{Q}(A,W)=\mathbb{E}(Y\vert A,W)$
\end{thm}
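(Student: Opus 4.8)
The plan is to write the parameter as $\Psi(P)=\mathbb{E}_P[b(W)^2]-(\mathbb{E}_P b(W))^2$ with $b(W)=\bar{Q}(1,W)-\bar{Q}(0,W)$, and to differentiate $\Psi(P_\epsilon)$ at $\epsilon=0$ along a path with score $S$, then read off the function multiplied by $S$ in the resulting $L^2_0(P)$ inner product. Because $b$ depends on $P$ only through the conditional law $p_Y$ while the outer expectations depend on $P$ only through $p_W$, the derivative splits into a ``$p_Y$ piece'' and a ``$p_W$ piece'', and, exactly as in the remark after Example~\ref{TSM} (since $\Psi$ does not involve $p_A$ and $T_A\perp T_Y\oplus T_W$), the efficient influence curve should land in $T_Y\oplus T_W$ with no $T_A$ component. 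By the product and chain rules together with dominated convergence,
\[
\tfrac{d}{d\epsilon}\Psi(P_\epsilon)\big|_{0}=\int 2\big(b(w)-\mathbb{E}b\big)\,\tfrac{d}{d\epsilon}b_\epsilon(w)\big|_{0}\,p_W(w)\,d\nu(w)+\int\big(b(w)^2-2\mathbb{E}b\cdot b(w)\big)\,\tfrac{d}{d\epsilon}p_{W,\epsilon}(w)\big|_{0}\,d\nu(w).
\]

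First I would compute $\tfrac{d}{d\epsilon}b_\epsilon(w)\big|_0$: writing $b_\epsilon(w)=\int y\big(p_{Y,\epsilon}(y\mid 1,w)-p_{Y,\epsilon}(y\mid 0,w)\big)d\nu(y)$ and using the second ``common trick'' of Section~\ref{integration}, item~\ref{tricks}, the derivative equals $\int\!\!\int y\,\tfrac{d}{d\epsilon}p_{Y,\epsilon}(y\mid a,w)\big|_0\,d\nu(y)\,\tfrac{2a-1}{p_A(a\mid w)}\,p_A(a\mid w)\,d\nu(a)$, and then (\ref{eq:3.1}) replaces $\tfrac{d}{d\epsilon}p_{Y,\epsilon}(y\mid a,w)\big|_0$ by $\big(S(o)-\mathbb{E}[S\mid a,w]\big)p_Y(y\mid a,w)$. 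Substituting this into the first integral above, pulling $2(b(w)-\mathbb{E}b)$ and $\tfrac{2a-1}{p_A(a\mid w)}$ into the integrand, and noting $p_Y(y\mid a,w)p_A(a\mid w)p_W(w)=p(o)$ collapses the triple integral to a single $\int(\cdot)\,S(o)\,p(o)\,d\nu(o)$: the $S(o)$ term carries the factor $y$, and the $\mathbb{E}[S\mid a,w]$ term integrates in $y$ to $\bar{Q}(a,w)\mathbb{E}[S\mid a,w]$ which, after reassembling $p(o)$, produces the $-\bar{Q}(a,w)$; together this contributes $2\big(b(W)-\mathbb{E}b\big)\tfrac{2A-1}{p_A(A\mid W)}\big(Y-\bar{Q}(A,W)\big)$ to the gradient. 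For the second integral, (\ref{eq:3.2}) gives $\tfrac{d}{d\epsilon}p_{W,\epsilon}(w)\big|_0=\big(\mathbb{E}[S\mid w]-\mathbb{E}S\big)p_W(w)$; since $b(w)^2-2\mathbb{E}b\cdot b(w)=(b(w)-\mathbb{E}b)^2-(\mathbb{E}b)^2$ is a function of $w$ alone, the tower property turns $\int(\cdot)(w)\,\mathbb{E}[S\mid w]\,p_W(w)\,d\nu(w)$ into $\mathbb{E}[S(O)(\cdot)(W)]$, and using $\mathbb{E}S(O)=0$ to discard the additive constant $(\mathbb{E}b)^2$ and to subtract $\Psi(P)=\mathbb{E}(b(W)-\mathbb{E}b)^2$, this piece contributes $(b(W)-\mathbb{E}b)^2-\Psi(P)$. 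Adding the two contributions reproduces the stated $D^\star(P)$, and since the derivative functional is bounded and linear (granting that $1/p_A(a\mid w)$ does not blow up), the function multiplying $S$ in $\langle\cdot,S\rangle_{L^2_0(P)}$ is, by Riesz, the efficient influence curve.

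The main obstacle I expect is the bookkeeping in the ``$p_Y$ piece'': one must introduce the $\tfrac{2a-1}{p_A(a\mid w)}$ weight at exactly the right moment so that, after invoking (\ref{eq:1}), every factor of the likelihood --- $p_Y$, $p_A$, and $p_W$ --- can be gathered back into $p(o)$ under a single integral, which is what makes the resulting expression a bona fide $L^2_0(P)$ inner product with $S$. A secondary point requiring care is tracking which terms vanish by $\mathbb{E}S(O)=0$ or by conditional-expectation cancellations, and keeping the additive constants straight so that the final answer is the unique mean-zero element of $T$.
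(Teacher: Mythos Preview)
Your proposal is correct and follows essentially the same route as the paper's proof: split the pathwise derivative into a $p_Y$ piece and a $p_W$ piece, insert the $(2a-1)/p_A(a\mid w)$ weight via the trick of Section~\ref{integration} item~\ref{tricks}, apply the key identities (\ref{eq:3.1}) and (\ref{eq:3.2}), and collapse everything into an $L^2_0(P)$ inner product with $S$. The only cosmetic difference is that the paper differentiates $\mathbb{E}_{P_\epsilon}\big(b_{P_\epsilon}(W)-\mathbb{E}_{P_\epsilon}b_{P_\epsilon}(W)\big)^2$ directly and observes that the cross term containing $\tfrac{d}{d\epsilon}\mathbb{E}_{P_\epsilon}b_{P_\epsilon}(W)$ vanishes because it is multiplied by $\int(b(w)-\mathbb{E}b)\,p_W(w)\,d\nu(w)=0$, whereas you expand the variance as $\mathbb{E}b^2-(\mathbb{E}b)^2$ first; both arrive at the same two integrals and proceed identically thereafter.
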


\begin{proof}

\begin{footnotesize}
\begin{align}
& \frac{d}{d\epsilon}\Psi(P_{\epsilon})(S)\biggr\vert_{\epsilon=0}  \nonumber \\
= &\frac{d}{d\epsilon}\mathbb{E}_{P_{\epsilon}}\biggr(b_{P_{\epsilon}}(W)-\mathbb{E}_{P_{\epsilon}}b_{P_{\epsilon}}(W)\biggr)^{2}\biggr\vert_{\epsilon=0}\nonumber \\
  = & \frac{d}{d\epsilon}\int\biggr(b_{P_{\epsilon}}(w)-\mathbb{E}_{P_{\epsilon}}b_{P_{\epsilon}}(W)\biggr)^{2}p_W{\epsilon}(w)d\nu(w)\biggr\vert_{\epsilon=0}\nonumber \\
 = &\int2\biggr(b_{P_{\epsilon}}(w)-\mathbb{E}_{P_{\epsilon}}b_{P_{\epsilon}}(W)\biggr)\frac{d}{d\epsilon}\biggr(b_{P_{\epsilon}}(w)-\mathbb{E}_{P_{\epsilon}}b_{P_{\epsilon}}(W)\biggr)p_W(w)d\nu(w)\biggr\vert_{\epsilon=0}\nonumber \\
 & +\int\biggr(b_P(w)-\mathbb{E}_{P}b_{P}(W)\biggr)^{2}\frac{d}{d\epsilon}p_{W,\epsilon}(w)\biggr\vert_{\epsilon=0}d\nu(w)\nonumber \\
 \text{ note that } & \int2\biggr(b_{P_{\epsilon}}(w)-\mathbb{E}_{P_{\epsilon}}b_{P_{\epsilon}}(W)\biggr)\frac{d}{d\epsilon}\left(\mathbb{E}_{P_{\epsilon}}b_{P_{\epsilon}}(W)\right)p_W(w)d\nu(w)\biggr\vert_{\epsilon=0} = 0 \text{ so we have:} \nonumber \\
\overset{(\ref{eq:3.2})}{ = }& \int2\biggr(b_{P}(w)-\mathbb{E}_{P}b_{P}(W)\biggr)\frac{d}{d\epsilon}b_{P_{\epsilon}}(w)p_W(w)d\nu(w)\biggr\vert_{\epsilon=0}\nonumber \\
& + \int\biggr(b_P(w)-\mathbb{E}_{P}b_{P}(W)\biggr)^{2} \left( \mathbb{E}[S(Y,A,W) \mid w] - \mathbb{E}S(Y,A,W)\right) p_{W}(w)d\nu(w) \nonumber \\
= & 2\int\biggr(b_P (w)-\mathbb{E}_P b_P (W)\biggr)\frac{d}{d\epsilon}\biggr[\int\biggr(yp_{Y\epsilon}(y\vert a=1,w)-yp_{Y\epsilon}(y\vert a=0,w)\biggr)d\nu(y)\biggr]p_{W}(w)d\nu(w)\biggr\vert_{\epsilon=0} \nonumber \\
& + \int\biggr(b_P(w)-\mathbb{E}_{P}b_{P}(W)\biggr)^{2} \int S(o) p_{Y,A}(y,a \mid w)d\nu(y,a)p_{W}(w)d\nu(w) \nonumber \\
 & - \int S(o)\Psi(P)p(o)d\nu(o) \nonumber\\
= & 2\int\biggr(b_P (w)-\mathbb{E}_P b_P (W)\biggr)\underbrace{\int\biggr(y\frac{d}{d\epsilon} p_{Y\epsilon}(y\vert a,w)\biggr\vert_{\epsilon=0}\frac{2a-1}{p_A(a\vert w)}p_A(a\vert w)d\nu(y,a)}_{\text{by sec. \ref{integration} item \ref{tricks}}}p_{W}(w)d\nu(w) \label{eq:31}\\
& + \int \biggr[ \biggr(b_P(w)-\mathbb{E}_{P}b_{P}(W)\biggr)^{2} - \Psi(P) \biggr]S(o) p(o) d\nu(o)  \nonumber 
\end{align}
\end{footnotesize}

Now continuing with the term (\ref{eq:31}).

\begin{scriptsize}
\begin{align}
 \overset{(\ref{eq:3.1})}{=} & 2\int\biggr(b_P(w)-\mathbb{E}_P b_P (W)\biggr)\biggr[\int y\biggr(\mathbb{E}_P[S(O) \mid y,a,w]  \nonumber \\
 & - \mathbb{E}_P[S(O) \mid a,w]\biggr)p_Y(y \mid a,w)\frac{2a-1}{p_A(a\vert w)}p_A(a\vert w)d\nu(y,a)\biggr]p_{W}(w)d\nu(w)\nonumber \\
   & \text{splitting into separate integrals}\nonumber \\
= & 2\int \biggr(b_P(w)-\mathbb{E}_P b_P (W)\biggr) \underbrace{\int S(o) y p_Y(y \mid a,w)\frac{2a-1}{p_A(a\vert w)}p_A(a\vert w)d\nu(y,a)}_{\text{an integral wrt a,y}}p_{W}(w)d\nu(w)  \nonumber \\
 & - 2\int \biggr(b_P(w)-\mathbb{E}_P b_P (W)\biggr) \int \underbrace{\int y p_y(y\vert a,w)d\nu(y)}_{\bar{Q}(a,w)}\mathbb{E}_P[S(O) \mid a,w] \frac{2a-1}{p_A(a\vert w)}p_A(a\vert w)d\nu(a)p_{W}(w)d\nu(w)\nonumber \\
 & \text{replace expectations with integrals}\nonumber \\
  & 2\int \biggr(b_P(w)-\mathbb{E}_P b_P (W)\biggr) \underbrace{\int S(o) y p_Y(y \mid a,w)\frac{2a-1}{p_A(a\vert w)}p_A(a\vert w)d\nu(y,a)}_{\text{an integral wrt a,y}}p_{W}(w)d\nu(w)  \nonumber \\
 & - 2\int \biggr(b_P(w)-\mathbb{E}_P b_P (W)\biggr)\int \bar{Q}(a,w)\int S(o) p_Y(y \mid a,w)d\nu(y) \frac{2a-1}{p_A(a\vert w)}p_A(a\vert w)d\nu(a)p_{W}(w)d\nu(w)\nonumber \\
   \overset{fubini}{=} & 2\int\biggr(b_P (w)-\mathbb{E}_P b_P (W)\biggr)\frac{(2a-1)}{p_A(a\vert w)}yS(o)p(o)d\nu(o) - 2\int\biggr(b_P (w)-\mathbb{E}_P b_P (W)\biggr)\frac{(2a-1)}{p_A(a\vert w)}\bar{Q}(a,w) S(o)p(o)d\nu(o) \nonumber \\
 = &  2\int\biggr(b_P (w)-\mathbb{E}_P b_P (W)\biggr)\frac{(2a-1)}{p_A(a\vert w)}(y-\bar{Q}(a,w))S(o)p(o)d\nu(o) \nonumber 
\end{align}
\end{scriptsize}
And we can see the unique riesz representer (the function in the $L^2_0(P)$ inner product with the score, $S$) is given by 
\[
2\left(b(W)-\mathbb{E}b(W)\right)\left(\frac{2A-1}{p_A(A\vert W)}\right)(Y-\bar{Q}(A,W))+\left(b(W)-\mathbb{E}b\right)^{2}-\Psi(P)
\]
completing the proof.
\end{proof}

\begin{rem}
From here on out we will avoid the double and triple integrals and take them as understood because otherwise the notation is too clumsy.
\end{rem}

\subsection{Example 4: Affect Among the Treated}
We have the identical data structure as before.  However, to avoid confusion and maintain notation, we will factor the density as follows:

$p(y,a,w) = p_Y(y \mid a,w)g(a \mid w) p_W(w)$ so $g(a \mid w)$ takes the place of $p_A(a \mid w)$.  We will use $P_A$ to be the marginal density of $A$, which is binary.
Thus the score $\frac{d}{de} p_{A,e}\biggr\vert_{e=0} = S_{A_{marg}}(a)p_{A}(a)$ as in the step before establishing, the key identity, (\ref{eq:1}).  But then we see the obvious that the score for a binary marginal is just $\mathbb{I}(A = a) - p_A(a)$, so we get 

\begin{equation}
\frac{d}{de} p_{A,e}\biggr\vert_{e=0}  = (\mathbb{I}(A = a) - p_A(a)) p_A(a) \label{easy}
\end{equation}

\[
\Psi(P) = \mathbb{E}_P[(\mathbb{E}_P[Y \mid 1, W] - \mathbb{E}_P[Y \mid 0, W] ) \mid A = 1]
\]

The efficient influence curve is given in van der Laan and Rose, 2011 as 
\begin{equation}
D^*(P) = \left( \frac{A}{P_A(A)} - \frac{(1-A)g(1 \mid W)}{P_A(1)g(0 \mid W)} \right)[Y - \bar{Q}(A,W)] + \frac{A}{P_A(A)}[\bar{Q}(1,W) - \bar{Q}(0,W) - \Psi(P)] \label{ATT}
\end{equation}

The reader is encouraged to derive this fact after being given a few first steps as follows:\\

We write the parameter mapping as an integral for a path along score, $S$, whose notation is supressed here as usual.  $S$ will appear later when we apply (\ref{eq:1}).\\

$\Psi(P_e)= \int y (p_{Y,e}(y \mid 1, w)- p_{Y,e}(y \mid 0, w))\frac{g_e(0 \mid w) p_{W,e}(w)}{p_{A,e}(0)} d\nu(y,w)$ 

and when you differentiate at $e=0$ you get four terms:\\

$\frac{d}{de}\int y (p_{Y,e}(y \mid 1, w)- p_{Y,e}(y \mid 0, w))\frac{g(0 \mid w) p_{W}(w)}{p_{A}(0)} d\nu(y,w)\biggr\vert_{e=0}$ 

$\frac{d}{de}\int y (p_{Y}(y \mid 1, w)- p_{Y}(y \mid 0, w))\frac{g_e(0 \mid w) p_{W}(w)}{p_{A}(0)} d\nu(y,w)\biggr\vert_{e=0}$ 

$\frac{d}{de}\int y (p_{Y}(y \mid 1, w)- p_{Y}(y \mid 0, w))\frac{g(0 \mid w) p_{W,e}(w)}{p_{A}(0)} d\nu(y,w)\biggr\vert_{e=0}$ 

$\frac{d}{de}\int y (p_{Y}(y \mid 1, w)- p_{Y}(y \mid 0, w))\frac{g(0 \mid w) p_{W}(w)}{p_{A,e}(0)} d\nu(y,w)\biggr\vert_{e=0}$ 

Any density that is being differentiated must be rewritten in its full conditional form, i.e., without any specific numbers in the conditional so you have $p_{Y,e}(y \mid a,w), p_{A,e}(a \mid w), p_{W,e}(w)$ and $p_{A,e}(a)$.  Thus we apply the usual trick to do so: \\ 

$\frac{d}{de}\int y p_{Y,e}(y \mid a, w)\underbrace{\frac{(2a-1)g(a \mid w)}{g(a \mid w)}}_{\text{by sec. \ref{integration} item \ref{tricks}}} \frac{g(0 \mid w) p_{W}(w)}{ p_{A}(0)} d\nu(y,a,w)\biggr\vert_{e=0}$ 

$\frac{d}{de}\int  (\bar{Q}(1,w) - \bar{Q}(0,w))g_e(a \mid w) (1-a)\frac{p_{W}(w)}{p_{A}(0)} d\nu(a,w)\biggr\vert_{e=0}$ 

$\frac{d}{de}\int (\bar{Q}(1,w) - \bar{Q}(0,w)) g(a \mid w) \frac{g(0 \mid w) p_{W,e}(w)}{p_{A}(0)} d\nu(a, w)\biggr\vert_{e=0}$ 

$\frac{d}{de}\int (\bar{Q}(1,w) - \bar{Q}(0,w))\underbrace{p_{W \mid A}(a \mid w)p_W(w) \frac{(1-a )}{p_{A,e}(a)}d\nu( a, w)}_{\text{by sec. \ref{integration} item \ref{tricks}}}\biggr\vert_{e=0}$ 

Now the reader is ready to proceed and carefully integrate, using (\ref{eq:3.1}), (\ref{eq:3.2})  and (\ref{easy}) to obtain the result (\ref{ATT}).

\subsection{Example 5: Efficient Influence Curve for Transporting \\ Stochastic Direct and Indirect Effects \\Non-parametric Model}
Here we consider data of the form $O = (YS, M, Z, A, W, S)$ where we consider $M,Z,A,S$ as binaries and $W$ as a vector of covariates.  $YS$ indicates we only see an outcome for when $S = 1$, i.e., for when the site of our population is taken from site 1.  The observed data likelihood factors as below, assuming the non-parametric model. \\
\begin{footnotesize}
$$p(O)=p_{Y\times S}(Y \times S \mid M,Z,A,W, S)g_{M}(M\mid Z,A,W,S)p_{Z}(Z\mid A,W,S)g_{A}(A\mid W,S)p_{W \mid S}(W\mid S)p_{S}(S)$$
\end{footnotesize}
We perform an intervention on $A$ for a population at both sites, S = 1 and 0.  $Z$ can be considered an intermediate confounder and $M$, a mediator.  Here we consider a data adaptive parameter where $\hat{g}_{M\mid a^*,W,s}(m \mid W) = \sum_z \hat{g}_M(M \mid z, a^*, W, s)(m \mid W)$ is the stochastic intervention on $M$ marginalized over $Z$ and defined for a fixed value of $A = a^*$ and $S = s$.  $\hat{g}_{M\mid a^*,W,s}$ can be considered as estimated from the data and thus, it can be considered as a given.  That is, it defines the parameter below data adaptively, in the next theorem.

\subsubsection{Notation} 
We will follow the time ordering of variables corresponding to $O = (YS, M, Z, A, W, S)$, moving backward in time.  $p_{YS}$ is the conditional density $ys$ given $m, z, a, w, s$ and $p_M$ is the conditional density of $m$ given $z, x, w, s$, etc.   If we break from this convention, we will notate as to such.  Since we reserve $a$ as fixed here (the intervention on A), $x$ is the variable for the treatment in the density (playing the role of random variable $A$).  We will also place variables always according to their time ordering when conditioned upon.  density

\begin{thm}
\label{ICSDE}
Consider a non-parametric model or semiparametric model with one or both the treatment and mediator mechanisms known (mechanisms for $A$ and $M$).  Consider the parameter defined by 
\[
\Psi(P) = \mathbb{E} \biggr[ \mathbb{E}\biggr[\sum_m \biggr[  \mathbb{E}Y\hat{g}_{M\mid a^*,W,s}(m \mid W) \mid M=m, W,Z,A=a \biggr] \mid A=a, W,S \biggr] \mid S = 0 \biggr]
\]
where the expectations are taken with respect to $P$.  Then the efficient influence curve is given by
\[
D^{*}(P)(O)=D_{Y}^{*}(P)(O)+D_{Z}^{*}(P)(O)+D_{W}^{*}(P)(O)
\]
where 
\begin{footnotesize}
\begin{align*}
D_{Y}^{*}(P)(O)&=\left(Y-\mathbb{E}\left[Y\mid M,Z,A,W\right]\right)*\\
&\frac{\hat{g}_{M\mid a^{*},W,s}\left(M\mid W\right)p_{Z}\left(Z\mid A,W,S=0\right)p_{S \mid W}\left(S=0\mid W\right)I(S=1,A=a)}{g_{M}\left(M\mid Z,A,W,S\right)p_{Z}\left(Z\mid A,W,S\right)g_A\left(A\mid W,S\right)p_{S \mid W}\left(S\mid W\right)P_{S}(S=0)}\\
D_{Z}^{*}(P)(O)&=\left(\bar{Q}_M(Z,A,W)-\bar{Q}_Z(A, W, S)\right)\frac{I(S=0,A=a)}{g_A\left(A\mid W,S\right)p_{S}(S=0)}\\
D_{W}^{*}(P)(O)&=\left(\bar{Q}_Z(A=a, W, S)-\Psi(P)\right)\frac{I(S=0)}{p_{S}(S=0)}
\end{align*}
\end{footnotesize}
\end{thm}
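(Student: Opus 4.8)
The plan is to follow the general technique of Section~\ref{generalapproach}: express $\Psi(P_{\epsilon})$ as an iterated integral against the perturbed density, differentiate at $\epsilon=0$, substitute the key identity~(\ref{eq:1}) (in its conditional forms (\ref{eq:3.1})--(\ref{eq:3.2}) and the analogous forms for the $Z$- and $W$-factors), rearrange the result into a single $L^2_0(P)$ inner product $\langle D^{*}(P),S\rangle$ with the score $S$, and read off the function sitting next to $S$. I would first fix shorthand by peeling the nested conditional expectations in $\Psi$ from the inside out: put $\bar{Q}_Y(m,z,a,w)=\mathbb{E}[Y\mid M=m,Z=z,A=a,W=w]$ (the outcome regression appearing in $D_Y^{*}$, identified through the stratum $S=1$ in which the outcome is observed), $\bar{Q}_M(z,a,w)=\sum_m \hat{g}_{M\mid a^{*},w,s}(m\mid w)\,\bar{Q}_Y(m,z,a,w)$, and $\bar{Q}_Z(a,w,s)=\sum_z \bar{Q}_M(z,a,w)\,p_Z(z\mid a,w,s)$, so that $\Psi(P)=\int \bar{Q}_Z(a,w,0)\,p_{W\mid S}(w\mid 0)\,d\nu(w)$.

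The key structural observation is that, because $\hat{g}_{M\mid a^{*},W,s}$ is held fixed and the parameter conditions on $A=a$ and on $S=0$ through conditional densities, $\Psi$ depends on the likelihood only through the three factors $p_{YS}$ (inside $\bar{Q}_Y$), $p_Z$ restricted to $S=0$ (inside $\bar{Q}_Z$), and $p_{W\mid S}$ restricted to $S=0$; the mechanisms $g_A$, $g_M$ and $p_S$ never enter. Hence differentiating $\Psi(P_{\epsilon})$ by the product rule (the interchange of $\tfrac{d}{d\epsilon}$ and $\int$ being justified by dominated convergence) produces exactly three terms, one per factor, so that $D^{*}(P)$ will have zero projection onto $T_A$, $T_M$ and $T_S$ --- which is also why the semiparametric models with $g_A$ and/or $g_M$ known share the same efficient influence curve.

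For each of the three terms I would proceed as in Examples~\ref{TSM} and~\ref{VTEIC}: rewrite the density being differentiated in its full, unevaluated conditional form, inserting the appropriate ``multiply and divide by $1$'' factor of Section~\ref{integration}, item~\ref{tricks}, so that the complete density $p(o)$ reappears under a single integral, and then apply~(\ref{eq:1}), which replaces each differentiated density by a difference of two conditional-mean terms. In the $p_{YS}$-term the leading term is $S(o)$ itself (since $YS$ is last in the time order) and the subtracted term, after integrating $Y$ against $p_{YS}$, becomes $\bar{Q}_Y$; reassembling $p(o)$ around $S(o)$ forces in precisely the ratios that rebuild $g_A$, $g_M$, $p_Z$, $p_{W\mid S}$ and $p_S$, which is where the clever covariate displayed in $D_Y^{*}$ is produced (its $p_{S\mid W}$ and $P_S(S{=}0)$ factors being merely a Bayes-rule rewriting of the $p_{W\mid S}$/$p_S$ ratios that come out directly), so this term equals $\big(Y-\bar{Q}_Y\big)$ times that weight. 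The $p_Z$-term likewise yields $\big(\bar{Q}_M(Z,A,W)-\bar{Q}_Z(A,W,S)\big)$ times $I(S{=}0,A{=}a)\big/\!\big[g_A(A\mid W,S)P_S(S{=}0)\big]$ --- the subtracted baseline here integrating $\bar{Q}_M$ against $p_Z$ to form $\bar{Q}_Z$ --- and the $p_{W\mid S}$-term yields $\big(\bar{Q}_Z(A,W,S)-\Psi(P)\big)$ times $I(S{=}0)\big/P_S(S{=}0)$, the baseline integrating $\bar{Q}_Z$ against $p_{W\mid S}$ to form $\Psi(P)$. Adding the three gives $\dot{\Psi}_P(S)=\langle D_Y^{*}+D_Z^{*}+D_W^{*},\,S\rangle$ for every $S\in L^2_0(P)$; since the functional is bounded and linear under the usual positivity assumptions and the tangent space is all of $L^2_0(P)$, this Riesz representer is the efficient influence curve, and each summand is readily checked to have conditional mean zero given its time-past, so the three pieces are mutually orthogonal as expected.

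The step I expect to be the main obstacle is the bookkeeping for $D_Y^{*}$: unlike the treatment-specific mean, the innermost regression must be ``transported'' simultaneously across four conditional mechanisms ($A$, $S$, $M$, $Z$), so one must track carefully which density each successive ``multiply by $1$'' step introduces and confirm that the accumulated product is exactly the displayed weight --- in particular keeping straight that the target conditional for $Z$ is $p_Z(\cdot\mid A,W,S{=}0)$ while the likelihood factor is $p_Z(\cdot\mid A,W,S)$, and that the site indicator inside $D_Y^{*}$ is $I(S{=}1)$ (the outcome is observed there) even though the outer averaging is over $S=0$. One should also check that at each stage the mean-zero subtraction $\mathbb{E}[S\mid\cdot]$ left behind by~(\ref{eq:1}) within a given factor's term is consumed cleanly inside that same term by integrating the appropriate $\bar{Q}$ against the corresponding density, so that no stray conditional-expectation pieces survive. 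The remaining points --- justifying the differentiation-under-the-integral and Fubini--Tonelli steps, and recording the positivity conditions ($g_A$, $g_M$, $p_Z$, $P_S$, $p_{S\mid W}$ bounded away from $0$ on the relevant strata) that keep $\dot{\Psi}_P$ a bounded linear functional --- are routine.
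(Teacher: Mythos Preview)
Your proposal is correct and follows essentially the same route as the paper: write $\Psi(P)$ as an iterated integral against $p_{YS}$, $p_Z(\cdot\mid a,W,0)$ and $p_{W\mid S}(\cdot\mid 0)$, differentiate by the product rule to obtain three terms, and for each term insert the ``multiply and divide by $1$'' factors of Section~\ref{integration} item~\ref{tricks} before applying the key identity~(\ref{eq:1}) (instantiated as (\ref{eq:3.51})--(\ref{eq:3.53})) to extract the three components $D_Y^{*}$, $D_Z^{*}$, $D_W^{*}$. Your flagged bookkeeping concern for $D_Y^{*}$---tracking the simultaneous ratios for $g_M$, $p_Z$, $g_A$, and the Bayes-rule conversion $p_{W\mid S}(w\mid 0)/p_{W\mid S}(w\mid s)\cdot p_S(s)/p_S(0)=p_{S\mid W}(0\mid w)/p_{S\mid W}(s\mid w)$---is exactly the part the paper spends the most space on, and your description of how the subtracted conditional-mean pieces are absorbed into the $\bar{Q}$'s matches the paper's computations.
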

Proof: 

(\ref{eq:1}) implies the following, replacing our usual score name, $S$, currently occupied by the site variable, $S$, with $\gamma$: 
\begin{scriptsize}
\begin{align}
\frac{d}{d\epsilon}\left(p_{Y,\epsilon}(Y\times S\mid M,Z,A,W,S)\right)\biggr\vert_{\epsilon=0} & = \left(\gamma(O)-\mathbb{E}\left[\gamma(O)\mid M,Z,A,W,S\right]\right)p_{Y}(Y \times S\mid M,Z,A,W,S)  \label{eq:3.51}\\
\frac{d}{d\epsilon}\left(p_{Z,\epsilon}(Z\mid A,W,S)\right)\biggr\vert_{\epsilon=0} & = \left(\mathbb{E}\left[\gamma(O)\mid Z,A,W,S\right]-\mathbb{E}\left[\gamma(O)\mid A,W,S\right]\right)p_{Z}(Z\mid A,W,S) \label{eq:3.52} \\
\frac{d}{d\epsilon}\left(p_{W \mid S,\epsilon}(W\mid S)\right)\biggr\vert_{\epsilon=0} & = \left(\mathbb{E}\left[\gamma(O)\mid W,S\right]-\mathbb{E}\left[\gamma(O)\mid S\right]\right)p_{W \mid S}(W\mid S) \label{eq:3.53}
\end{align}
\end{scriptsize}
Our parameter of interest is given by 
\begin{scriptsize}
\[
\Psi(P)=\int yp_{Y}(y\mid m,z,a,w,s=1)\hat{g}_{M\mid a^{*},W,s}\left(m\mid w\right)p_{Z}\left(z\mid a,w,s=0\right)p_{W \mid S}\left(w\mid s=0\right)d\nu(y,m,z,w)
\]
\end{scriptsize}

We then take the pathwise derivative for a path along score,
$\gamma$.  We can note to the reader that this derivative is unaffected by knowledge of the treatment mechanism, $E[A \mid S,W]$, or the mediator mechansim, $E[M \mid Z,A,W,S]$, due to the estimand not depending on these models as well as the fact that scores, $\gamma_A$ and $\gamma_M$ are orthogonal (have 0 covariance) to $\gamma_Y, \gamma_Z, \gamma_W$ in the Hilbert Space $L^2(P)$.  This is why for a semi-parametric model where the M and/or A mechanisms are known, the efficient influence curve will be the same as that for the non-parametric model.  

\begin{scriptsize}
\begin{align}
\frac{d}{d\epsilon}\Psi(P_{\epsilon})\biggr\vert_{\epsilon=0} & = \frac{d}{d\epsilon}\int yp_{Y,\epsilon}(y\mid m,z,a,w,s=1)\hat{g}_{M\mid a^{*},W,s}\left(m\mid w\right)p_{Z,\epsilon}\left(z\mid a,w,s=0\right)p_{W \mid S,\epsilon}\left(w\mid s=0\right)d\nu(y,m,z,w)\biggr\vert_{\epsilon=0}\nonumber \\
 & = \frac{d}{d\epsilon}\int yp_{Y,\epsilon}(y\mid m,z,a,w,s=1)\hat{g}_{M\mid a^{*},W,s}\left(m\mid w\right)p_{Z}\left(z\mid a,w,s=0\right)p_{W \mid S}\left(w\mid s=0\right)d\nu(y,m,z,w)\biggr\vert_{\epsilon=0} \label{derivT}\\
 &  +\frac{d}{d\epsilon}\int yp_{Y}(y\mid m,z,a,w,s=1)\hat{g}_{M\mid a^{*},W,s}\left(m\mid w\right)p_{Z,\epsilon}\left(z\mid a,w,s=0\right)p_{W \mid S}\left(w\mid s=0\right)d\nu(y,m,a,z,w)\biggr\vert_{\epsilon=0}\nonumber \\
 &  +\frac{d}{d\epsilon}\int yp_{Y}(y\mid m,z,a,w,s=1)\hat{g}_{M\mid a^{*},W,s}\left(m\mid w\right)p_{Z}\left(z\mid a,w,s=0\right)p_{W\mid S\epsilon}\left(w\mid s=0\right)d\nu(y,m,z,w)\biggr\vert_{\epsilon=0}\nonumber 
\end{align}
\end{scriptsize}\

The first term in \ref{derivT}:

\begin{scriptsize}
\begin{align*}
 & \frac{d}{d\epsilon}\int yp_{Y,\epsilon}(y\mid m,z,a,w,s=1)\hat{g}_{M\mid a^{*},W,s}\left(m\mid w\right)p_{Z}\left(z\mid x=a,w,s=0\right)p_{W \mid S}\left(w\mid s=0\right)d\nu(y,m,z,w)\biggr\vert_{\epsilon=0}\protect\\
= & \int y\frac{d}{d\epsilon}p_{Y,\epsilon}(ys\mid m,z,x,w,s)\biggr\vert_{\epsilon=0}\hat{g}_{M\mid a^{*},W,s}\left(m\mid w\right)\frac{g_{M}\left(m\mid z,x,w,s\right)}{g_{M}\left(m\mid z,x,w,s\right)}p_{Z}\left(z\mid x=a,w,s=0\right)\frac{p_{Z}\left(z\mid x,w,s\right)}{p_{Z}\left(z\mid x,w,s\right)}\protect\\
 & *\frac{I(s=1,x=a)g_A\left(x\mid w,s\right)}{g_{A}\left(x\mid w,s\right)}p_{W \mid S}\left(w\mid s=0\right)\frac{p_{W \mid S}\left(w\mid s\right)}{p_{W \mid S}\left(w\mid s\right)}\frac{p_{S}(s)}{p_{S}(s=1)}d\nu(y,m,z,x,w,s)\protect\\
\protect\overset{(\ref{eq:3.51})}{=} & \int y\left(\gamma(o)-\mathbb{E}\left[\gamma(o)\mid m,z,x,w,s\right]\right)p_{Y}(ys\mid m,z,x,w,s)\hat{g}_{M\mid a^{*},W,S}\left(m\mid w\right)\frac{g_{M}\left(m\mid z,x,w,s\right)}{g_{M}\left(m\mid z,x,w,s\right)}p_{Z}\left(z\mid x=a,w,s=0\right)\protect\\
 & *\frac{p_{Z}\left(z\mid x,w,s\right)}{p_{Z}\left(z\mid x,w,s\right)}\frac{I(s=1,x=a)g_A\left(x\mid w,s\right)}{g_A\left(x\mid w,s\right)P_{S}(s=1)}p_{W \mid S}\left(w\mid s=0\right)\frac{p_{W \mid S}\left(w\mid s\right)}{p_{W \mid S}\left(w\mid s\right)}p_{S}(s)d\nu(y,m,z,x,w,s)\protect\\
= & \int \gamma(o)\biggr(y-\mathbb{E}\biggr[y\mid m,z,x,w,s\biggr]\biggr)\times\\
& \frac{\hat{g}_{M\mid a^{*},W,s}(m\mid w)p_{Z}(z\mid a,w,s=0)p_{S \mid W}(s=0\mid w)I(s=1,x=a)}{g_{M}(m\mid z,w,s=1)p_{Z}(z\mid x=a,w,s=1)g_A(a\mid w,s=1)p_{S \mid W}(s=1\mid w)p_{S}(s=0)}p(o)d\nu(o)\protect\\
= & \left\langle \gamma,D_{Y}^{*}(P)\right\rangle _{L_{0}^{2}(P)}
\end{align*}
\end{scriptsize}

where 
\begin{scriptsize}
\[
D_{Y}^{*}(P)(O)=\left(Y-\mathbb{E}\left[Y\mid M,Z,A,W\right]\right)\frac{\hat{g}_{M\mid a^{*},W,s}\left(M\mid W\right)p_{Z}\left(Z\mid A,W,S=0\right)p_{S \mid W}\left(S=0\mid W\right)I(S=1,A=a)}{g_{M}\left(M\mid Z,A,W,S\right)p_{Z}\left(Z\mid A,W,S\right)g_A\left(A\mid W,S\right)p_{S \mid W}\left(S\mid W\right)P_{S}(S=0)}
\]
\end{scriptsize}

\begin{rem} 
The reader may notice $D^*_Y(P)(O)$ is not a mean 0 function of $Y \mid M, Z, W$ because it also depends on the variable, $A$. Hence, it is not an element of the tangent space under the restricted model where the mechanism for $M$ and $Y$ do not depend directly on $A$, i.e., $A$ being an instrument. Therefore, $D^*(P)(O)$ has an extra orthogonal component in addition to the efficient influence curve for the restricted model so any efficiently constructed estimator based on this influence curve will not be efficient for the restricted semi-parametric model.  
\end{rem}
The second term in (\ref{derivT}):

\begin{scriptsize}
\begin{align*}
 & \frac{d}{d\epsilon}\int yp_{Y}(y\mid m,z,w,s=1)\hat{g}_{M\mid a^{*},W,s}\left(m\mid w\right)p_{Z,\epsilon}\left(z\mid a,w,s=0\right)p_{W \mid S}\left(w\mid s=0\right)d\nu(y,m,z,w)\biggr\vert_{\epsilon=0}\protect\\
= & \int yp_{Y}(y\mid m,z,x,w)\hat{g}_{M\mid a^{*},W,s}\left(m\mid w\right)\frac{d}{d\epsilon}p_{Z,\epsilon}\left(z\mid x,w,s\right)\biggr\vert_{\epsilon=0}\frac{I(s=0)I(x=a)}{g_A\left(x\mid w,s\right)p_{S}(s=0)}\protect\\
 & *g_A\left(x\mid w,s\right)p_{W \mid S}\left(w\mid s\right)p_{S}(s)d\nu(y,m,z,x,w,s)\protect\\
\protect\overset{(\ref{eq:3.52})}{=} & \int yp_{Y}(y\mid m,x,z,w)\hat{g}_{M\mid a^{*},W,s}\left(m\mid w\right)\left(\mathbb{E}\left[\gamma(o)\mid z,x,w,s\right]-\mathbb{E}\left[\gamma(o)\mid x,w,s\right]\right)p_{Z}(z\mid x,w,s)\protect\\
 & *\frac{I(s=0)I(x=a)}{g_A\left(x\mid w,s\right)p_{S}(s=0)}g_A\left(x\mid w,s\right)p_{W \mid S}\left(w\mid s\right)p_{S}(s)d\nu(y,m,z,x,w,s)\protect\\
= & \int \gamma(o)\biggr(\mathbb{E}_{\hat{g}_{M\mid a^{*},W,s}}\biggr(\mathbb{E}\biggr[Y\mid M,A,Z,W\biggr]\mid z,x,w,s=1\biggr)-\\
&\mathbb{E}_{P_{Z\mid A,W,S}}\biggr[\mathbb{E}_{\hat{g}_{M\mid a^{*},W,s}}\biggr(\mathbb{E}\biggr[Y\mid M,Z,AW\biggr]\mid Z,A,W,S=1\biggr)\mid x,w,s\biggr]\biggr)*\frac{I(s=0,x=a)}{g_A\left(x\mid w,s\right)p_{S}(s=0)}p(o)d\nu(o)\protect\\
= & \left\langle \gamma,D_{Z}^{*}(P)\right\rangle _{L_{0}^{2}(P)}
\end{align*}

\end{scriptsize}

We substitute 
\begin{align*}
\bar{Q}_M(z,x,w) & =\mathbb{E}_{\hat{g}_{M\mid a^{*},W,s}}\left(\mathbb{E}\left[Y\mid M,A,Z,W\right]\mid z,x,w\right)\\
\bar{Q}_Z(x,w,s) & =\mathbb{E}_{P_{Z\mid A,W,S}}\left[\mathbb{E}_{\hat{g}_{M\mid a^{*},W,s}}\bar{Q}_M(Z,A,W)\mid x,w,s\right]
\end{align*}
 and since $x$ represents the treatment, $A$, in the integrals above, we get

$\mathbf{D_{Z}^{*}(P)(O)=\left(\bar{Q}_M(Z,A,W)-\bar{Q}_Z(A, W, S)\right)\frac{I(S=0,A=a)}{g_A\left(A\mid W,S\right)p_{S}(S=0)}}$

The third term in \ref{derivT}:
\begin{scriptsize}
\begin{align*}
 & \frac{d}{d\epsilon}\int yp_{Y}(y\mid m,z,a,w,s=1)\hat{g}_{M\mid a^{*},W,s}\left(m\mid w\right)p_{Z}\left(z\mid a,w,s=0\right)p_{W \mid S,\epsilon}\left(w\mid s=0\right)d\nu(y,m,z,w)\biggr\vert_{\epsilon=0}\protect\\
= & \int yp_{Y}(y\mid m,a,z,w)\hat{g}_{M\mid a^{*},W,s}\left(m\mid w\right)p_{Z}\left(z\mid a,w,s\right)\frac{d}{d\epsilon}p_{W \mid S,\epsilon}\left(w\mid s\right)\biggr\vert_{\epsilon=0}\frac{I(s=0)}{p_{S}(s=0)}p_{S}(s)d\nu(y,m,z,x,w,s)\protect\\
\protect\overset{(\ref{eq:3.53})}{=} & \int yp_{Y}(y\mid m,a,z,w)\hat{g}_{M\mid a^{*},W,s}\left(m\mid w\right)p_{Z}\left(z\mid a,w,s\right)\left(\mathbb{E}\left[\gamma(o)\mid w,s\right]-\mathbb{E}\left[\gamma(o)\mid s\right]\right)\protect\\
 & *p_{W \mid S}(w\mid s)\frac{I(s=0)}{p_{S}(s=0)}p_{S}(s)d\nu(y,m,z,x,w,s)\protect\\
= & \int S(o)\left(\bar{Q}_Z(x=a,w,s)-\Psi(P)\right)\frac{I(s=0)}{p_{S}(s=0)}p(o)d\nu(o)\protect\\
= & \left\langle \gamma,D_{W}^{*}\right\rangle _{L_{0}^{2}(P)}
\end{align*}
\end{scriptsize}

where $\mathbf{D_{W}^{*}(P)(O)=\left(\bar{Q}_Z(A=a, W, S)-\Psi(P)\right)\frac{I(S=0)}{p_{S}(S=0)}}$

Thus the efficient influence curve is the sum of its orthogonal components:
\textbf{
\[
D^{*}(P)(O)=D_{Y}^{*}(P)(O)+D_{Z}^{*}(P)(O)+D_{W}^{*}(P)(O)
\]
}


\subsection{Example 6: Efficient Influence Curve for Transporting \\ Stochastic Direct and Indirect Effects \\Restricted Model}

Now we will derive the efficient influence curve for same parameter as the previous section, except, we will assume the restricted semi-parametric model where $M$ and $Y$ mechanism do not depend directly on the instrument, $A$.   

\begin{thm}
\label{ICSDEr}
The efficient influence curve for our restricted model, where $M$ and $Y$ do not depend directly on $A$, is given by 
\[
D^{*}(P)(O)=D_{Y,r}^{*}(P)(O)+D_{Z}^{*}(P)(O)+D_{W}^{*}(P)(O)
\]
where
\begin{footnotesize}
\[
D_{Y,r}^{*}(P)(O) = \biggr(y-\mathbb{E}\biggr[y\mid m,z,w\biggr]\biggr) \frac{\hat{g}_{M\mid a^{*},W,s}(m\mid w)p_{Z}(z\mid a_0,w,s=0)p_{S \mid W}(s=0\mid w)I(s=1)}{g_{M,r}(m\mid z,w,s)p_{Z\mid W,S}(z\mid w,s)p_{S \mid W}(s\mid w)p_{S}(s=0)}
\]
\end{footnotesize}
\end{thm}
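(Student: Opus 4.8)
The plan is to follow the recipe of Theorem \ref{ICSDE} with a single change of bookkeeping: in the restricted model only the mediator and outcome mechanisms are altered, so the $Z$- and $W$-components of the pathwise derivative are reproduced verbatim and only the $Y$-component must be recomputed. First I would record the restricted factorization
\[
p(o)=p_{Y}(ys\mid m,z,w,s)\,g_{M,r}(m\mid z,w,s)\,p_{Z}(z\mid a,w,s)\,g_{A}(a\mid w,s)\,p_{W\mid S}(w\mid s)\,p_{S}(s),
\]
and note that $p_{Z}$, $g_{A}$, $p_{W\mid S}$, $p_{S}$ and the functional $\Psi$ are unchanged, so $T_{Z}$ and $T_{W}$ are literally the same subspaces as in Theorem \ref{ICSDE}. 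The parts of $\frac{d}{d\epsilon}\Psi(P_{\epsilon})$ that produced $D_{Z}^{*}$ and $D_{W}^{*}$ there invoked only the key identities (\ref{eq:3.52}) and (\ref{eq:3.53}), which still hold, so those two components carry over word for word; and, exactly as before, $\Psi$ depends on neither $g_{M,r}$ nor $g_{A}$ while $\gamma_{M},\gamma_{A}$ are orthogonal to $\gamma_{Y},\gamma_{Z},\gamma_{W}$, so there is no $M$- or $A$-component.

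For the $Y$-component I would begin from
\[
\frac{d}{d\epsilon}\int y\,p_{Y,\epsilon}(y\mid m,z,w,s=1)\,\hat{g}_{M\mid a^{*},W,s}(m\mid w)\,p_{Z}(z\mid a,w,s=0)\,p_{W\mid S}(w\mid s=0)\,d\nu(y,m,z,w)\,\big|_{\epsilon=0},
\]
apply the restricted-model instance of (\ref{eq:1}),
\[
\frac{d}{d\epsilon}p_{Y,\epsilon}(ys\mid m,z,w,s)\big|_{\epsilon=0}=\bigl(\mathbb{E}[\gamma\mid y,m,z,w,s]-\mathbb{E}[\gamma\mid m,z,w,s]\bigr)\,p_{Y}(ys\mid m,z,w,s),
\]
split into the two conditional-expectation pieces, and turn each back into an integral against $p$. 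Two features of the restricted model force the new shape of the answer. First, since $p_{Y,\epsilon}(y\mid m,z,w,s=1)$ in $\Psi$ is no longer conditioned on $A=a$, the ``multiply and divide by the missing factors of $p(o)$'' trick needs only the indicator $I(s=1)$ to select the site and no $I(A=a)/g_{A}$ factor, which is why neither of those survives in $D_{Y,r}^{*}$. Second, under the restricted factorization the conditional law of $A$ given $(Y,M,Z,W,S)$ is $p_{Z}(z\mid a,w,s)\,g_{A}(a\mid w,s)/p_{Z\mid W,S}(z\mid w,s)$ with $p_{Z\mid W,S}(z\mid w,s)=\sum_{a}p_{Z}(z\mid a,w,s)g_{A}(a\mid w,s)$; undoing the conditional expectations of $\gamma$ therefore leaves a residual factor $1/p_{Z\mid W,S}(z\mid w,s)$, which is precisely the term that distinguishes $D_{Y,r}^{*}$ from the $D_{Y}^{*}$ of Theorem \ref{ICSDE}. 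After reinserting $g_{M,r}(m\mid z,w,s)$, $p_{W\mid S}(w\mid s)$ and $p_{S}(s)$ on top and bottom, rewriting the ratio of $p_{W\mid S}$ factors via Bayes in terms of $p_{S\mid W}$ and $p_{S}$, and integrating $y$ against $p_{Y}(y\mid m,z,w,s=1)$ in the second piece to produce the outcome regression $\mathbb{E}\left[Y\mid M,Z,W\right]$, everything collapses into $\int\gamma(o)\,D_{Y,r}^{*}(P)(o)\,p(o)\,d\nu(o)$, and $D_{Y,r}^{*}$ is read off as the Riesz representer.

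The main obstacle is the same delicate computation as in the non-parametric case, now with more moving parts: carrying out the reconstruction of $p(o)$ so that $p_{Y},g_{M,r},p_{Z},g_{A},p_{W\mid S},p_{S}$ reassemble exactly while the leftover ratio is the claimed weight, in particular keeping straight the two site values $s=1$ (inside $\gamma$ and the rebuilt $p(o)$) and $s=0$ (inherited from the $p_{Z}(\cdot\mid s=0)$ and $p_{W\mid S}(\cdot\mid s=0)$ factors of $\Psi$) and handling the marginalized $p_{Z\mid W,S}$ factor, which has no analogue in Theorem \ref{ICSDE}. A quick independent check (and an alternative proof) is available: because the restricted tangent space sits inside the nonparametric one and $\Psi$ is unchanged, the restricted canonical gradient is the projection of the nonparametric canonical gradient onto the smaller tangent space, so $D_{Y,r}^{*}=\mathbb{E}[D_{Y}^{*}\mid Y,M,Z,W,S]-\mathbb{E}[D_{Y}^{*}\mid M,Z,W,S]$ with $D_{Y}^{*}$ from Theorem \ref{ICSDE}; carrying out this conditional expectation (which averages the $A$-weight of $D_{Y}^{*}$ against $p_{Z}(\cdot\mid a,w,s)g_{A}(a\mid w,s)/p_{Z\mid W,S}(\cdot\mid w,s)$ and annihilates the second term because the restricted outcome regression is $A$-free) returns exactly the stated $D_{Y,r}^{*}$ and makes transparent that it lies in $\overline{\{g(Y,M,Z,W,S):\mathbb{E}[g\mid M,Z,W,S]=0\}}$, hence is the canonical gradient rather than merely a gradient.
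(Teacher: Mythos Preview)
Your proposal is correct, and in fact you outline both viable routes. The interesting point is that your \emph{primary} approach---recomputing the pathwise derivative from scratch in the restricted model using the restricted instance of the key identity for $p_{Y,\epsilon}(ys\mid m,z,w,s)$---is \emph{not} the route the paper takes. The paper instead uses exactly what you call your ``quick independent check'': it takes the nonparametric $D_Y^*$ already obtained in Theorem~\ref{ICSDE} and projects it onto $T_{Y,r}$ via $\mathbb{E}[D_Y^*\mid YS,M,Z,W,S]-\mathbb{E}[D_Y^*\mid M,Z,W,S]$, using the identities for the conditional law of $A$ given the remaining variables in the restricted factorization, and noting that the second conditional expectation vanishes because the centered outcome residual integrates to zero against $p_{Y,r}(y\mid m,z,w)$. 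Your direct computation is more self-contained (it does not presuppose Theorem~\ref{ICSDE}) but carries more bookkeeping, in particular the step of unfolding $\mathbb{E}[\gamma\mid y,m,z,w,s]$ as an integral over $A$ against $p_Z(z\mid a,w,s)g_A(a\mid w,s)/p_{Z\mid W,S}(z\mid w,s)$ and then reassembling $p(o)$. The paper's projection route is shorter precisely because it recycles the earlier derivation and makes the relationship between the two gradients explicit; your route has the compensating advantage that it stands on its own and would apply even if one had never written down the nonparametric case.
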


Proof:

We can note that our only task here is to project $D^*_Y(P)$, our component of the influence curve in $T_Y$, onto the subspace of $T_Y$ given by 

$T_{Y,r} = \overline{\{\gamma : \mathbb{E}(\gamma(O) \mid YS, M, Z, W, S) = 0 , \mathbb{E}\gamma(O)^2 < \infty\}}$.  

$p_{YS,r}$ is the conditional density of $ys$ given $m, z, w$ and $p_{M,r}$ is the conditional density of $m$ given $z, w, s$ in the restricted model, i.e. we don't put the instrument, $a$, in those conditional statements as that is the model assumption. We remind the reader that a "bar" signifies the variable and all past variables as in, $\bar{M} = m, z, x, w, s$.

Notice the following:
\begin{align}
p_{A\mid \bar{YS},r}(x \mid ys, m, z, w, s=1) & = \frac{p_{\bar{A},r}(x, ys, m, z, w, s=1)}{p_{O/A}(ys, m, z, w, s=1)} \nonumber \\
& = \frac{p_{Y,r}(y \mid m, z, w)p_{M,r}(m \mid z, w, s=1)p_{\bar{Z}}(z, x, w, s)}{p_{Y,r}(y \mid m, z, w) p_{M,r}(m \mid z, w, s=1)p_{\bar{Z}}( z, w, s=1)}  \nonumber  \\
& = \frac{p_{\bar{Z}}(z, x, w, s=1)}{p_{\bar{Z}/A}( z, w, s=1)} \label{other1}
\end{align}

\begin{align}
p_{A,YS, r}(x, ys \mid m, z, w, s=1) & = \frac{p_{\bar{Y},r}(ys, x, m, z, w, s=1)}{p_{\bar{M},r}(m, z, w, s=1)} \nonumber \\
& = \frac{p_{Y,r}(y \mid m, z, w)p_{\bar{Z}}(z, x, w, s=1)}{p_{\bar{Z}/A}(z, w, s=1)} \label{other2}
\end{align}

Thus from \ref{other1} and \ref{other2} and referencing item \ref{projections} in section \ref{generalapproach}:
\begin{footnotesize}
\begin{align*}
& \prod(D_Y^* \Vert T_{Y,r})\\
 & = \mathbb{E}(D_Y^*(O) \mid YS, M, Z, W, S) - \mathbb{E}(D_Y^*(O) \mid M, Z, W, S)\\
& = \int \biggr(y-\mathbb{E}\biggr[y\mid m,z,w\biggr]\biggr)\times\\
& \frac{\hat{g}_{M\mid a^{*},W,s}(m\mid w)p_{Z}(z\mid a,w,s=0)p_{S \mid W}(s=0\mid w)I(s=1,x=a)}{g_{M,r}(m\mid z,w,s=1)p_{Z}(z\mid a,w,s=1)g_A(a\mid w,1)p_{S \mid W}(1\mid w)p_{S}(0)} p_{A\mid \bar{YS},r}(x \mid ys, m, z, w, s)d\nu(x)\\
& - \int \biggr(y-\mathbb{E}\biggr[y\mid m,z,w\biggr]\biggr)\times\\
& \frac{\hat{g}_{M\mid a^{*},W,s}(m\mid w)p_{Z}(z\mid a,w,s=0)p_{S \mid W}(s=0\mid w)I(s=1,x=a)}{g_{M,r}(m\mid z,w,s=1)p_{Z}(z\mid a,w,s=1)g_A(a\mid w,1)p_{S \mid W}(1\mid w)p_{S}(0)} p_{A\mid \bar{YS},r}(x ,ys\mid m, z, w, s)d\nu(x, ys)\\
& \text{remembering we are integrating wrt x and all else is fixed in the first integral}\\
& \text{All is fixed but x and ys in the second integral.  Since I(s=1), ys = 1 and s = 1}\\
& = \int \biggr(y-\mathbb{E}\biggr[y\mid m,z,w\biggr]\biggr)\times\\
& \frac{\hat{g}_{M\mid a^{*},W,s}(m\mid w)p_{Z}(z\mid a,w,s=0)p_{S \mid W}(s=0\mid w)I(s=1,x=a)}{g_{M,r}(m\mid z,w,s=1)p_{Z}(z\mid a,w,s=1)g_A(a\mid w,1)p_{S \mid W}(1\mid w)p_{S}(0)} p_{A\mid \bar{YS},r}(x \mid ys, m, z, w, s=1)d\nu(x)\\
& - \int \biggr(y-\mathbb{E}\biggr[y\mid m,z,w\biggr]\biggr)\times\\
& \frac{\hat{g}_{M\mid a^{*},W,s}(m\mid w)p_{Z}(z\mid a,w,s=0)p_{S \mid W}(s=0\mid w)I(s=1,x=a)}{g_{M,r}(m\mid z,w,s=1)p_{Z}(z\mid a,w,s=1)g_A(a\mid w,1)p_{S \mid W}(1\mid w)p_{S}(0)} p_{A\mid \bar{YS},r}(x ,ys\mid m, z, w, s=1)d\nu(x, ys)\\
&\text{use (\ref{other1}) and (\ref{other2}) for the 1st and 2nd integrals respectively, which kills the 2nd integral:}\\
& = \int \biggr(y-\mathbb{E}\biggr[y\mid m,z,w\biggr]\biggr)\times\\
& \frac{\hat{g}_{M\mid a^{*},W,s}(m\mid w)p_{Z}(z\mid a,w,s=0)p_{S \mid W}(s=0\mid w)I(s=1,x=a)}{g_{M,r}(m\mid z,w,s=1)p_{Z}(z\mid a,w,s=1)g_A(a\mid w,1)p_{S \mid W}(1\mid w)p_{S}(0)} \frac{p_{\bar{Z}}(z, x, w, s=1)}{p_{\bar{Z}}( z, w, s=1)}d\nu(x)\\
& - \underbrace{\int \biggr(y-\mathbb{E}\biggr[y\mid m,z,w\biggr]\biggr)p_{Y,r}(y \mid m, z, w)d\nu(y)}_{\text{is 0}}\times \\
& \int \frac{\hat{g}_{M\mid a^{*},W,s}(m\mid w)p_{Z}(z\mid a,w,s=0)p_{S \mid W}(s=0\mid w)I(s=1,x=a)}{g_{M,r}(m\mid z,w,s=1)p_{Z}(z\mid a,w,s=1)g_A(a\mid w,1)p_{S \mid W}(1\mid w)p_{S}(0)} \frac{p_{\bar{Z}}(z, x, w, s=1)}{p_{\bar{Z}/A}(z, w, s=1)} d\nu(x)\\
& = \biggr(y-\mathbb{E}\biggr[y\mid m,z,w\biggr]\biggr) \frac{\hat{g}_{M\mid a^{*},W,s}(m\mid w)p_{Z}(z\mid a,w,s=0)p_{S \mid W}(s=0\mid w)I(s=1)}{g_{M,r}(m\mid z,w,s)p_{Z \mid W,S}(z\mid w,s)p_{S \mid W}(s\mid w)p_{S}(s=0)}
\end{align*}
\end{footnotesize}
And the proof is complete since the other components of the unrestricted model's influence curve will remain the same.  The reader may note that $p_{Z \mid W,S}(z\mid w,s) = p_{Z}(z\mid 1, w,s) g_A(1 \mid w,s)+ p_{Z}(z\mid 0,w,s)g_A(0 \mid w,s)$, so we need not perform any additional regressions for this restricted model.

\subsection{Example 7: Efficient Influence Curve for Transporting \\Stochastic Direct, Fixed Parameter, Non-parametric Model}

According to our general technique of section \ref{generalapproach}, our observed data is of the form, $O_6, O_5,...,O_1$ = $YS, M, Z, A, W, S$, and thus our we will have corresponding orthogonal tangent spaces $T_{YS},T_M, T_Z, T_A, T_W, T_S$. The orthogonality and the fact our parameter mapping does not depend on the treatment mechanism $g_A$, tells us the efficient influence curve for the unrestricted model, which is non-parametric, will be the same as for the model with a known treatment mechanism.\\

Let us define our parameter by the mapping from the observed data model to the real numbers by $\Psi_f(P)$ and retain the identical definition as in theorem \ref{ICSDE} but bear in mind we are including the true $g_{M\mid a^{*},W,s^*} = \hat{g}_{M\mid a^{*},W,s^*}$ in the definition so we no longer have a "hat" $g$ but rather the real $g$.  Therefore our parameter of interest depends on the true models for $P_Z$ and $P_M$.  Thus the efficient influence curve for this parameter in both the unrestricted and restricted models will have components in the tangent space subspace, $T_M$ and an additional component in $T_Z$ to what we had before for the data adaptive parameter.  In other words, this parameter is fixed, not data adaptive as in the previous two examples.  

\begin{thm}
The efficient influence curve for the unrestricted model at distribution, $P$, is given by $D^*_f(P) = D^*_{f,Y}(P)+D^*_{f,M}(P)+D^*_{f,Z}(P)+D^*_{f,W}(P)$ where
\begin{footnotesize}
\begin{align*}
D^*_{f,Y}(P) &= D^*_Y(P) \\
D^*_{f,M}(P) &= (M - g_M(1 \mid Z, A, W, S))\frac{(\bar{Q}_{a,0}(1,W) - \bar{Q}_{a,0}(0,W))p_{S\mid W}(0 \mid W) \mathbb{I}(A=a^*, S = s^*)}{g_A(A \mid W,S)p_{S \mid W}(S \mid W) P(S=0)}\\
D^*_{f,Z}(P) &= D^*_Z(P) \\
& +(Z - p_Z(1 \mid A, W, S))\frac{(\bar{Q}_{a,0}^Z(1,A,W,S) - \bar{Q}_{a,0}^Z(0,A,W,S))p_{S\mid W}(0 \mid W) \mathbb{I}(A=a^*, S = s^*)}{g_A(A \mid W,S), p_{S \mid W}(S \mid W) P(S=0)}\\
D^*_{f,W}(P) & = D^*_W(P)
\end{align*}
\end{footnotesize}

$D^*_Y$,  $D^*_Z$ and $D^*_W$ are the same as for the data adaptive parameter and we define
\begin{align*}
\bar{Q}(M,Z,A,W) &= \mathbb{E}[Y \mid M,Z,A,W] \\
\bar{Q}_{a,0}(M,W) &= \sum_z \bar{Q}(M,z,a,W) p_Z(z \mid a, W, 0)\\
\bar{Q}_{a,0}^Z(Z,A,W,S) &= \sum_m \bar{Q}_{a,0}(m,W)(M,Z,a,W) p_M(m \mid Z,A,W,S)
\end{align*}  
\end{thm}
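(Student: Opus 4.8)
The plan is to apply the general technique of Section~\ref{generalapproach} exactly as in the proof of Theorem~\ref{ICSDE}; the only new feature is that the fixed marginal intervention $g_{M\mid a^{*},W,s^{*}}(m\mid w)=\sum_{z'}g_{M}(m\mid z',a^{*},w,s^{*})\,p_{Z}(z'\mid a^{*},w,s^{*})$ now depends on the true $g_{M}$ and on $p_{Z}$ a \emph{second} time, in addition to the copy $p_{Z}(z\mid a,w,s=0)$ already present in the parameter. First I would write the parameter as
\begin{footnotesize}
\[
\Psi_{f}(P)=\int y\,p_{Y}(y\mid m,z,a,w,s=1)\Bigl(\sum_{z'}g_{M}(m\mid z',a^{*},w,s^{*})p_{Z}(z'\mid a^{*},w,s^{*})\Bigr)p_{Z}(z\mid a,w,s=0)\,p_{W\mid S}(w\mid s=0)\,d\nu(y,m,z,w),
\]
\end{footnotesize}
form the path $P_{\epsilon}$ along an arbitrary score $\gamma$, and differentiate at $\epsilon=0$. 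The product rule (with dominated convergence) splits $\frac{d}{d\epsilon}\Psi_{f}(P_{\epsilon})\bigr\vert_{\epsilon=0}$ into five groups of terms: one for $\frac{d}{d\epsilon}p_{Y,\epsilon}$, one for $\frac{d}{d\epsilon}g_{M,\epsilon}$, one for the marginalization copy $\frac{d}{d\epsilon}p_{Z,\epsilon}(z'\mid a^{*},w,s^{*})$, one for the outer copy $\frac{d}{d\epsilon}p_{Z,\epsilon}(z\mid a,w,s=0)$, and one for $\frac{d}{d\epsilon}p_{W\mid S,\epsilon}$.

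Three of these five groups --- those from $\frac{d}{d\epsilon}p_{Y,\epsilon}$, the outer $\frac{d}{d\epsilon}p_{Z,\epsilon}$, and $\frac{d}{d\epsilon}p_{W\mid S,\epsilon}$ --- are term-for-term the integrals already evaluated in the proof of Theorem~\ref{ICSDE} once $\hat{g}_{M}$ is relabelled as the true $g_{M}$ (which is just a fixed factor from their standpoint), so they contribute exactly $D_{Y}^{*}(P)$, $D_{Z}^{*}(P)$ and $D_{W}^{*}(P)$; I would cite that computation rather than repeat it, which already explains why $D_{f,Y}^{*}=D_{Y}^{*}$ and $D_{f,W}^{*}=D_{W}^{*}$. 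The genuinely new work is confined to the $\frac{d}{d\epsilon}g_{M,\epsilon}$ group and the marginalization-slot $\frac{d}{d\epsilon}p_{Z,\epsilon}$ group.

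For the $g_{M}$ group I would use the key identity~(\ref{eq:1}) in its $T_{M}$ form, $\frac{d}{d\epsilon}g_{M,\epsilon}(m\mid z,x,w,s)\bigr\vert_{\epsilon=0}=\bigl(\mathbb{E}[\gamma(O)\mid m,z,x,w,s]-\mathbb{E}[\gamma(O)\mid z,x,w,s]\bigr)g_{M}(m\mid z,x,w,s)$, remembering that here $g_{M}$ is evaluated at the fixed arguments $(m,z',a^{*},w,s^{*})$. After substituting I would carry out the inner $y$-integration (collapsing $p_{Y}$ to $\bar{Q}(m,z,a,w)$) and the $z$-marginalization against $p_{Z}(z\mid a,w,s=0)$ (collapsing to $\bar{Q}_{a,0}(m,w)$), then reverse the order of integration using the multiply-and-divide trick of Section~\ref{integration}, item~\ref{tricks}: insert $g_{A}(x\mid w,s)/g_{A}(x\mid w,s)$, $p_{W\mid S}(w\mid s)/p_{W\mid S}(w\mid s)$ and $p_{S}(s)/p_{S}(s)$, together with the indicator $\mathbb{I}(A=a^{*},S=s^{*})$, so as to assemble the full density $p(o)$ under the integral. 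Because $M$ is binary, $\mathbb{E}[M\mid Z,A,W,S]=g_{M}(1\mid Z,A,W,S)$ and $\bar{Q}_{a,0}(M,W)-\bar{Q}_{a,0}(0,W)=M\bigl(\bar{Q}_{a,0}(1,W)-\bar{Q}_{a,0}(0,W)\bigr)$, so combining the $\mathbb{E}[\gamma\mid m,z',a^{*},w,s^{*}]$ and $\mathbb{E}[\gamma\mid z',a^{*},w,s^{*}]$ pieces produces the residual factor $M-g_{M}(1\mid Z,A,W,S)$; I would also use the elementary identity $\frac{p_{W\mid S}(W\mid 0)}{p_{W\mid S}(W\mid S)\,p_{S}(S)}=\frac{p_{S\mid W}(0\mid W)}{p_{S\mid W}(S\mid W)\,P(S=0)}$ to put the weights in the stated form, and then read off $D_{f,M}^{*}(P)$ as the Riesz representer. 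The marginalization-slot $p_{Z}$ group is handled identically, now with the $T_{Z}$ identity (the analogue of (\ref{eq:3.52})): the $m$-marginalization against $g_{M}$ produces $\bar{Q}_{a,0}^{Z}(Z,A,W,S)$ and, $Z$ being binary, the residual $Z-p_{Z}(1\mid A,W,S)$, giving precisely the extra summand appended to $D_{Z}^{*}(P)$ in the definition of $D_{f,Z}^{*}(P)$.

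Finally I would collect the five contributions and note that each of $D_{f,Y}^{*}$, $D_{f,M}^{*}$, $D_{f,Z}^{*}$, $D_{f,W}^{*}$ is mean zero given the past of $YS$, $M$, $Z$, $W$ respectively --- in particular both summands of $D_{f,Z}^{*}$ lie in $T_{Z}$ --- so each sits in its own orthogonal tangent subspace and their sum is the unique element of $T$ representing $\dot{\Psi}_{f,P}$; by the Riesz argument of Section~\ref{generalapproach} this sum is the efficient influence curve, and, as remarked before the theorem, it is unaffected by knowledge of $g_{A}$ since $\Psi_{f}$ does not depend on $g_{A}$ and $T_{A}\perp T_{YS}\oplus T_{M}\oplus T_{Z}\oplus T_{W}$. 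The main obstacle is the bookkeeping in the ``assemble $p(o)$'' step for the two new terms: because $g_{M\mid a^{*},W,s^{*}}$ is evaluated at the dummy $z'$ and at the fixed values $a^{*},s^{*}$ rather than at the realized $Z,A,S$, one has to thread the indicator $\mathbb{I}(A=a^{*},S=s^{*})$ and the inverse weights $1/\bigl(g_{A}(A\mid W,S)\,p_{S\mid W}(S\mid W)\,P(S=0)\bigr)$ carefully through the reversal of integration order so as to land on a genuine $L^{2}_{0}(P)$ inner product.
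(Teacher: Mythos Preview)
Your proposal is correct and follows essentially the same approach as the paper: both recognize that the only change relative to Theorem~\ref{ICSDE} comes from the extra differentiation of $g_{M\mid a^{*},W,s^{*}}(m\mid w)=\sum_{z'}g_{M}(m\mid z',a^{*},w,s^{*})p_{Z}(z'\mid a^{*},w,s^{*})$, split that extra piece into a $g_{M}$-term and a marginalization-slot $p_{Z}$-term, collapse the $y$ and $z$ integrals to produce $\bar{Q}_{a,0}$ and $\bar{Q}_{a,0}^{Z}$, then use the multiply-and-divide trick with the indicator $\mathbb{I}(A=a^{*},S=s^{*})$ and the binary structure of $M$ and $Z$ to rewrite each as an $L^{2}_{0}(P)$ inner product with $\gamma$. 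Your explicit five-way product-rule split and the Bayes identity you cite for converting $p_{W\mid S}$-weights into $p_{S\mid W}$-weights are exactly what the paper does implicitly in its displayed computations of (\ref{t1fixed}) and (\ref{t2fixed}).
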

Proof:

According to the general approach of section \ref{generalapproach}, we will compute a pathwise derivative of the parameter mapping.  From equation (\ref{eq:1}) we obtain 
\begin{equation}
\frac{d}{d\epsilon}\biggr\vert_{\epsilon = 0} g_{M,\epsilon} (m \mid z,x,w,s) = \left(\mathbb{E}[\gamma(O) \mid m,z,x,w,s] - \mathbb{E}[\gamma(O) \mid z,x,w,s]\right)g_{M} (m \mid z,x,w,s) \label{Mderiv}
\end{equation}
where $\gamma$ is the score along which the pathwise derivative is being computed.  Everything stays identical to theorem \ref{ICSDE}, except we will have the following extra piece of the derivative:

\begin{scriptsize}
\begin{align}
& \frac{d}{d\epsilon}\biggr\vert_{\epsilon = 0} \int y p_Y(y \mid m, z, a, w,s=1) \sum_c \left[ g_{M,\epsilon} (m \mid c, a^*, w, s^*) p_{Z, \epsilon}(c \mid a^*, w, s^*) \right] p_Z(z \mid a,w,0)p_W(W \mid 0) d\nu(o) \nonumber \\
& = \frac{d}{d\epsilon}\biggr\vert_{\epsilon = 0} \int y p_Y(y \mid m, z, a, w,s=1) \sum_c \left[ g_{M,\epsilon} (m \mid c, a^*, w, s^*) p_{Z}(c \mid a^*, w, s^*) \right] p_Z(z \mid a,w,0)p_W(W \mid 0) d\nu(o) \label{t1fixed} \\
& + \frac{d}{d\epsilon}\biggr\vert_{\epsilon = 0} \int y p_Y(y \mid m, z, a, w,s =1) \sum_c \left[ g_{M} (m \mid c, a^*, w, s^*) p_{Z, \epsilon}(c \mid a^*, w, s^*) \right] p_Z(z \mid a,w,0)p_W(W \mid 0) d\nu(o) \label{t2fixed}
\end{align}
\end{scriptsize}

To compute \ref{t1fixed} we have

\begin{scriptsize}
\begin{align*}
& \frac{d}{d\epsilon}\biggr\vert_{\epsilon = 0} \int y p_Y(y \mid m, z, a, w) \sum_c \left[ g_{M,\epsilon} (m \mid c, a^*, w, s^*) p_{Z}(c \mid a^*, w, s^*) \right] p_Z(z \mid a,w,0)p_W(w \mid 0) d\nu(y, m,z,w)\\
=&  \frac{d}{d\epsilon}\biggr\vert_{\epsilon = 0} \int \bar{Q}_{a,0}(m,w) \sum_c \left[ g_{M,\epsilon} (m \mid c, a^*, w, s^*) p_{Z}(c \mid a^*, w, s^*) \right] p_W(w \mid 0) d\nu(m,w) \\
=&  \frac{d}{d\epsilon}\biggr\vert_{\epsilon = 0} \int \bar{Q}_{a,0}(m,w) g_{M,\epsilon} (m \mid z, x, w, s) p_{Z}(z \mid x, w, s) \mathbb{I}(x= a^*, s = s^*)*\\
& \frac{p_{A,S \mid W}(x,s \mid w) p_{S \mid W}(0 \mid w)p_W(w)}{ p_{A,S \mid W}(x,s \mid w) P(s=0)} d\nu(m,z,x,w,s) \\
\overset{(\ref{Mderiv})}{=}&  \int \gamma(o)\left(\bar{Q}_{a,0}(m,w) -\left(\bar{Q}_{a,0}(1,w)g_{M}(1 \mid z, x, w, s) + \bar{Q}_{a,0}(0,w)g_{M}(0 \mid z, x, w, s) \right)\right)*  \\
&\frac{\mathbb{I}(x=a^*, s = s^*) p_{S \mid W}(0 \mid w)}{ p_{A,S \mid W}(x,s \mid w) P(s=0)} p(o)d\nu(o)\\ 
= &  \int \gamma(o)(m - g_M(1 \mid z,x,w, s))\frac{\mathbb{I}(x=a^*, s = s^*)(\bar{Q}_{a,0}(1,w) - \bar{Q}_{a,0}(0,w)) p_{S \mid W}(0 \mid w)}{ g_A(x \mid s, w) p_{S \mid W}(s \mid w) P(s=0)} p(o)d\nu(o) \\
=& \langle \gamma, D^*_{f,M}(P) \rangle_{L^2(P)}
\end{align*}
\end{scriptsize}

To compute \ref{t2fixed} we have

\begin{scriptsize}
\begin{align*}
& \frac{d}{d\epsilon}\biggr\vert_{\epsilon = 0} \int y p_Y(y \mid m, z, a, w) \sum_c \left[ g_{M} (m \mid c, a^*, w, s^*) p_{Z,\epsilon}(c \mid a^*, w, s^*) \right] p_Z(z \mid a,w,0)p_W(w\mid 0) d\nu(m,z,w) \\
=&  \frac{d}{d\epsilon}\biggr\vert_{\epsilon = 0} \int \bar{Q}_{a,0}(m,w) \sum_c \left[ g_{M} (m \mid c, a^*, w, s^*) p_{Z,\epsilon}(c \mid a^*, w, s^*) \right] p_W(W \mid 0) d\nu(m,w) \\
=&  \frac{d}{d\epsilon}\biggr\vert_{\epsilon = 0} \int \bar{Q}_{a,0}(m,w) g_{M} (m \mid z, x, w, s) p_{Z,\epsilon}(z \mid x, w, s) \mathbb{I}(x= a^*, s = s^*)*\\
& \frac{p_{A,S \mid W}(x,s \mid w) p_{S \mid W}(0 \mid w)p_W(W)}{ p_{A,S \mid W}(x,s \mid w) P(s=0)} d\nu(m,z,x,w,s) \\
\overset{(\ref{eq:3.52})}{=}&  \int \gamma(o)\left(\bar{Q}_{a,0}^Z(z,x,w,s) -\left(\bar{Q}_{a,0}^Z(1,x,w,s)p_{Z}(1 \mid x, w, s) + \bar{Q}_{a,0}^Z(0,x,w,s)p_{Z}(0 \mid x, w, s) \right)\right)*  \\
&\frac{\mathbb{I}(x=a^*, s = s^*) p_{S \mid W}(0 \mid w)}{ p_{A,S \mid W}(x,s \mid w) P(s=0)} p(o)d\nu(o)\\ 
= &  \int \gamma(o)(z - p_Z(1 \mid x,w, s))\frac{\mathbb{I}(x=a^*, s = s^*)(\bar{Q}_{a,0}^Z(1,x,w,s) - \bar{Q}_{a,0}^Z(0,x,w,s)) p_{S \mid W}(0 \mid w)}{ g_A(x \mid w,s) p_{S \mid W}(s \mid w) P(s=0)} p(o)d\nu(o) \\
=& \langle \gamma, D^*_{f,Z}(P) \rangle_{L^2(P)}
\end{align*}
\end{scriptsize}
by the general approach in section \ref{generalapproach} we have finished the proof.  

\subsection{Example 8: Efficient Influence Curve for Transporting \\Stochastic Direct, Fixed Parameter, Restricted Model}
We will now derive the efficient influence as per the previous section parameter but we will assume the $M$ and $Y$ mechanisms do not directly depend on $A$, i.e., A is an instrument.  

\begin{thm}
The efficient influence curve for the unrestricted model at distribution, $P$, is given by $D^*_f(P) = D^*_{f,Y,r}(P)+D^*_{f,M,r}(P)+D^*_{f,Z}(P)+D^*_{f,W}(P)$ where
\begin{footnotesize}
\begin{align*}
D^*_{f,Y,r}(P) &= D^*_{Y,r}(P) \\
D^*_{f,M,r}(P) &= (M - g_M(1 \mid Z, A, W, S))\frac{(\bar{Q}_{a,0}(1,W) - \bar{Q}_{a,0}(0,W))p_{S\mid W}(0 \mid W) \mathbb{I}(S = s^*)p_Z(Z \mid a^*, W,S)}{p_Z(Z \mid  W,S) p_{S \mid W}(S \mid W) P(S=0)}\\
D^*_{f,Z}(P) &= D^*_Z(P) + (Z - p_Z(1 \mid A, W, S))*\\
&\frac{(\bar{Q}_{a,0}^Z(1,A,W,S) - \bar{Q}_{a,0}^Z(0,A,W,S)p_{S\mid W}(0 \mid W) \mathbb{I}(A=a^*, S = s^*)}{g_A(A \mid W,S), p_{S \mid W}(S \mid W) P(S=0)}\\
D^*_{f,W}(P) & = D^*_W(P)
\end{align*}
\end{footnotesize}

where $D^*_{Y,r}$ remains the same as for the restricted model and the data adaptive parameter in theorem \ref{ICSDEr} because this portion of the influence curve is not affected by the scores in $T_M$ due to it being orthogonal to $T_M$.  $D^*_{f,Z}$ and  $D^*_{f,W}$ are the same as for the fixed parameter and unrestricted model because $T_M$ is orthogonal to $T_Z$, $T_W$ and $T_S$. 
\end{thm}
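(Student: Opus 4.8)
The plan is to obtain the restricted-model efficient influence curve as the projection of the unrestricted fixed-parameter efficient influence curve $D^*_f(P)$ from the previous theorem onto the restricted tangent space, exactly as was done for the data-adaptive parameter in the proof of Theorem~\ref{ICSDEr}. First I would note that the restriction ``the $M$ and $Y$ mechanisms do not depend on $A$'' shrinks only the subspaces $T_Y$ and $T_M$, to $T_{Y,r}$ and $T_{M,r}$ respectively, while $T_Z$, $T_A$, $T_W$ and $T_S$ are unchanged. Since in the restricted model $M\perp A\mid Z,W,S$, a mean-zero-given-$(Z,W,S)$ function of $(M,Z,W,S)$ automatically satisfies $\mathbb{E}[\gamma\mid Z,A,W,S]=0$, so $T_{M,r}\subseteq T_M$ and likewise $T_{Y,r}\subseteq T_Y$; hence the restricted tangent space $T_r$ is a closed subspace of the full tangent space and the canonical gradient for the restricted model is $\prod(D^*_f(P)\,\Vert\,T_r)$. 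Because $D^*_f(P)=D^*_{f,Y}(P)+D^*_{f,M}(P)+D^*_{f,Z}(P)+D^*_{f,W}(P)$ is already an orthogonal sum along $T_Y\oplus T_M\oplus T_Z\oplus T_W$, this projection splits componentwise.

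For the $T_Z$ and $T_W$ components there is nothing to do: $D^*_{f,Z}(P)\in T_Z$ and $D^*_{f,W}(P)\in T_W$, and these subspaces are untouched by the restriction, so $D^*_{f,Z}(P)$ and $D^*_{f,W}(P)$ carry over verbatim. For the $T_{Y,r}$ component there is also nothing new: $D^*_{f,Y}(P)$ equals $D^*_Y(P)$, whose projection onto $T_{Y,r}$ was already computed in the proof of Theorem~\ref{ICSDEr} and equals $D^*_{Y,r}(P)$; hence $D^*_{f,Y,r}(P)=D^*_{Y,r}(P)$.

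The only genuine computation is the projection of $D^*_{f,M}(P)$ onto $T_{M,r}$. Using the projection identity of item~\ref{projections} adapted to the restricted factorization --- where the variables preceding $M$ are $(Z,W,S)$ rather than $(Z,A,W,S)$ --- we have $\prod(h\,\Vert\,T_{M,r})=\mathbb{E}[h\mid M,Z,W,S]-\mathbb{E}[h\mid Z,W,S]$. Write $D^*_{f,M}(P)=(M-g_M(1\mid Z,A,W,S))\,R(A,W,S)$ with $R$ the ratio displayed in the previous theorem. In the restricted model $g_M(1\mid Z,A,W,S)=g_M(1\mid Z,W,S)=\mathbb{E}[M\mid Z,W,S]$ is a function of $(Z,W,S)$ only, and $M\perp A\mid Z,W,S$, so $\mathbb{E}[D^*_{f,M}(P)\mid M,Z,W,S]=(M-g_M(1\mid Z,W,S))\,\mathbb{E}[R\mid Z,W,S]$ and $\mathbb{E}[D^*_{f,M}(P)\mid Z,W,S]=0$. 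It then remains to evaluate $\mathbb{E}[R\mid Z,W,S]=\sum_a R(a,W,S)\,p_{A\mid Z,W,S}(a\mid Z,W,S)$ via Bayes' rule, $p_{A\mid Z,W,S}(a\mid Z,W,S)=p_Z(Z\mid a,W,S)\,g_A(a\mid W,S)/p_Z(Z\mid W,S)$: the $g_A(a\mid W,S)$ in the denominator of $R$ cancels, the indicator $\mathbb{I}(A=a^*)$ in $R$ collapses the sum to $a=a^*$, and a factor $p_Z(Z\mid W,S)=\sum_a p_Z(Z\mid a,W,S)\,g_A(a\mid W,S)$ is left in the denominator, yielding exactly the stated $D^*_{f,M,r}(P)$. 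All other components are unchanged, which finishes the argument.

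I expect the main obstacle to be bookkeeping rather than anything conceptual: correctly pinning down $T_{M,r}$ and its projection operator (in particular dropping $A$ from the conditioning set) and carrying the Bayes'-rule cancellation through so that the $g_A$ factor disappears and $p_Z(Z\mid W,S)$ emerges cleanly in the denominator. A secondary point worth verifying is that $D^*_{f,M}(P)$ genuinely lies in $T_M$, so that its orthogonal part in $T_M$ relative to $T_{M,r}$ is simply discarded by the projection; this holds because $g_M(1\mid Z,A,W,S)=\mathbb{E}[M\mid Z,A,W,S]$ makes $M-g_M(1\mid Z,A,W,S)$ mean zero given $(Z,A,W,S)$ while $R$ depends only on $(A,W,S)$.
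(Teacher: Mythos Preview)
Your proposal is correct and matches the paper's approach: both project the unrestricted fixed-parameter influence curve componentwise onto the restricted tangent space, observe that only the $T_Y$ and $T_M$ components require work (the former already done in Theorem~\ref{ICSDEr}), and then compute $\prod(D^*_{f,M}(P)\,\Vert\,T_{M,r})=\mathbb{E}[D^*_{f,M}\mid M,Z,W,S]-\mathbb{E}[D^*_{f,M}\mid Z,W,S]$, with the second term vanishing. The only cosmetic difference is that the paper writes the conditional expectation as an integral against the joint density $p_{A,YS,r}(x,ys\mid m,z,w,s)$ and invokes the density identities analogous to (\ref{other1})--(\ref{other2}), whereas you bypass the trivial $YS$ integration (since $D^*_{f,M}$ does not depend on $YS$) and apply Bayes' rule directly to $p_{A\mid Z,W,S}$; the two routes are equivalent and yield the same cancellation of $g_A$ and appearance of $p_Z(Z\mid W,S)$ in the denominator.
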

 We will utilize the following facts, very similarly to equations \ref{other1} and \ref{other2}:
 \begin{align}
 p_{A,YS,r}(a, ys \mid m, z, w, s) = \frac{p_{YS,r}(ys \mid m,z,w,s) p_{\bar{Z}}(z,x,w,s)}{p_{\bar{Z}/A}(z,w,s)} \label{other11}\\
  p_{A,YS,M,r}(a, ys,m \mid z, w, s) = \frac{p_{YS,r}(ys \mid m,z,w,s) p_{M,r}(m \mid z,w,s)p_Z(z \mid x,w,s)}{p_{\bar{Z}/A}(z,w,s)} \label{other21}
\end{align}

We will project onto the tangent space $D^*_{f,M}(P)$ onto the tangent space of mean zero function of $O$ given $Z,W,S$.
\begin{footnotesize}
\begin{align*}
D^*_{f,M,r}(P) &= \prod( D^*_{f,M}(P) \mid T_{A, YS, M} ) \\
=&\mathbb{E} [D^*_{f,M}(P)(O) \mid M,Z,W,S] - \mathbb{E} [D^*_{f,M}(P)(O) \mid Z,W,S]\\
\overset{(\ref{other21})}{=}&  \mathbb{E} [D^*_{f,M}(P)(O) \mid M,Z,W,S] \\
= & \int (m - g_M(1 \mid z,x,w, s))\frac{\mathbb{I}(x=a^*, s = s^*)(\bar{Q}_{a,0}(1,w) - \bar{Q}_{a,0}(0,w)) p_{S \mid W}(0 \mid w)}{ g_A(x \mid s, w) p_{S \mid W}(s \mid w)P(s=0)}*\\
&  p_{A,YS,r}(a, ys \mid m,z,w,s)d\nu(a,ys) \\
\overset{(\ref{other11})}{=}& \int (m - g_M(1 \mid z,x,w, s))\frac{\mathbb{I}(s = s^*)(\bar{Q}_{a,0}(1,w) - \bar{Q}_{a,0}(0,w)) p_Z(z \mid a*, w, s)p_{S \mid W}(0 \mid w)}{p_{Z\mid W,S}(z \mid w,s) p_{S \mid W}(s \mid w) P(s=0)}
\end{align*}
\end{footnotesize}

Since $x$ plays the role of $A$ in the integrand, so as to not confuse a lower case $a$ with the fixed values, the proof is complete.

\subsection{Example 9: Efficient Influence Curve for Mean Under Stochastic Intervention for Longitudinal Data}
Let us assume we have longitudinal data of the form $L(0)$ = baseline confounders, $A(0)$, treatment given at baseline, followed by time varying confounders, $L(1)$ and treatment at time point 1, $A(1)$ so that our observed data is $O = (L(0), A(0), L(1), A(1),...,L(K), A(K), Y)$ where $Y$ is the outcome.  We will use the shorthand notation $\bar{L}(j) = (L(0),...,L(j))$ and likewise for $\bar{A}(j)$ so that $O = (\bar{L}(K), \bar{A}(K), Y)$, for the treatment or exposure variable.  Note, $a(-1)$ and $l(-1)$ are null and there is
no treatment mechanism at time $K+1.$  We define the conditional probability distributions, $P_{L(i)}$, the conditional distribution of $L(i)$ given the past as well as $P_{A(i)}$, the conditional distribution of $A(i)$ given the past.  The corresponding respective densities to these conditional distributions have the same subscripted notation, $p_{L(i)}$ and $g_{A(i)}$, where we use the letter g to distinguish the treatment mechanism densities it from the conditional densities of the confounders, $L(i)$.  $\bar{L}(i) = (L(i),...,L(0))$, the confounder history through time, $i$, and likewise for $\bar{A}(i)$, the treatment history through time, $i$.  As usual we use lower case letters for these equivalent variables when using integral notation.  
\begin{thm}
The efficient influence curve for the mean under stochastic intervention given by

\[
g(\bar{A})=\prod_{i=0}^{K}g_{i}^{*}(A(i)\mid\bar{L}(i),\bar{A}(i-1))
\]
is given by the function
\begin{footnotesize}
\begin{align}
& D^*(P)(O) = D^*(P)(\bar{L}(K+1), \bar{A}(K)) \\
&= \sum_{j=0}^{K+1}\left(\prod_{i=0}^{j-1}\frac{g_i^*(A(i)\mid\bar{L}(i),\bar{A}(i-1))}{g_i(A(i)\mid\bar{L}(i),\bar{A}(i-1))}\right)\left(\bar{Q}_{L(j)}(\bar{L}(i),\bar{A}(i-1))-\mathbb{E}_{P_{L(j)}}[\bar{Q}_{L(j)}\mid \bar{L}(i-1),\bar{A}(i-1)]\right)
\end{align}
\end{footnotesize}
\end{thm}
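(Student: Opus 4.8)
\emph{Proof plan.} The plan is to follow the general recipe of Section \ref{generalapproach} verbatim: perturb only the confounder factors, differentiate the parameter mapping along a path, invoke the key identity (\ref{eq:1}), and then reassemble the full density $p(o)$ under the integral so the Riesz representer can be read off. Writing $Y=L(K+1)$ for convenience, and recalling both that $\Psi$ depends on $P$ only through the $p_{L(j)}$ (the intervention densities $g_i^*$ are prescribed, not features of $P$) and that $T_{A(i)}\perp T_{L(j)}$, only the $L$-factors carry $\epsilon$, so that
\[
\Psi(P_\epsilon)=\int y\Bigl(\prod_{j=0}^{K+1}p_{L(j),\epsilon}(l(j)\mid\bar l(j-1),\bar a(j-1))\Bigr)\Bigl(\prod_{i=0}^{K}g_i^*(a(i)\mid\bar l(i),\bar a(i-1))\Bigr)\,d\nu(o).
\]
Before differentiating I would introduce the sequential-regression object: $\bar Q_{L(j)}(\bar L(j),\bar A(j-1))$ is the conditional mean of $Y$ obtained by integrating out $A(j),L(j+1),\dots,Y$ against the true confounder densities and the intervened $g^*$-densities, so that $\bar Q_{L(K+1)}=Y$, its conditional mean over $L(K+1)$ given $(\bar L(K),\bar A(K))$ is the ordinary outcome regression $\bar Q(\bar L(K),\bar A(K))$, and in general the recursion $\bar Q_{L(j)}(\bar L(j),\bar A(j-1))=\sum_{a(j)}g_j^*(a(j)\mid\cdot)\,\mathbb{E}_{P_{L(j+1)}}[\bar Q_{L(j+1)}\mid\cdot]$ holds; note $\bar Q_{L(0)}$ is a function of $L(0)$ only and $\Psi(P)=\mathbb{E}_{P_{L(0)}}\bar Q_{L(0)}$.

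Next I would differentiate under the integral (dominated convergence), so the product rule yields $\sum_{j=0}^{K+1}$ terms, the $j$-th carrying exactly one factor $\frac{d}{d\epsilon}p_{L(j),\epsilon}|_{\epsilon=0}$ with all other $L$-factors and all $g_i^*$ evaluated at $\epsilon=0$. In the $j$-th term I would use Fubini--Tonelli to integrate out the ``future'' variables $L(j+1),\dots,Y$ and $A(j),\dots,A(K)$ first; by construction of the recursion this collapses the portion of the integrand to the right of the perturbed factor, together with $y$, into $\bar Q_{L(j)}(\bar l(j),\bar a(j-1))$, while the ``past'' factors $\prod_{j'<j}p_{L(j')}$ and $\prod_{i<j}g_i^*$ remain. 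Then the key identity (\ref{eq:1}) replaces $\frac{d}{d\epsilon}p_{L(j),\epsilon}|_{\epsilon=0}$ by $\bigl(\mathbb{E}[S\mid\bar L(j),\bar A(j-1)]-\mathbb{E}[S\mid\bar L(j-1),\bar A(j-1)]\bigr)p_{L(j)}$.

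The bookkeeping step is then to restore the full density: multiplying and dividing by $\prod_{i<j}g_i(a(i)\mid\cdot)$ rewrites $\prod_{i<j}g_i^*$ as $\bigl(\prod_{i<j}\frac{g_i^*}{g_i}\bigr)\prod_{i<j}g_i$, and $\prod_{j'\le j}p_{L(j')}\prod_{i<j}g_i$ is precisely the marginal density of $(\bar L(j),\bar A(j-1))$, so integration against it is a bona fide $L^2_0(P)$ expectation. Splitting the difference of conditional expectations gives two pieces: the first, by the tower property (the weight and $\bar Q_{L(j)}$ are functions of $(\bar L(j),\bar A(j-1))$), becomes $\langle S,\,(\prod_{i<j}\frac{g_i^*}{g_i})\bar Q_{L(j)}\rangle$; the second, after integrating out $L(j)$ inside the inner conditional expectation and again invoking the tower property, becomes $-\langle S,\,(\prod_{i<j}\frac{g_i^*}{g_i})\,\mathbb{E}_{P_{L(j)}}[\bar Q_{L(j)}\mid\bar L(j-1),\bar A(j-1)]\rangle$. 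Summing over $j$ identifies the Riesz representer as the asserted $D^*(P)$ (the $j=K+1$ summand producing the familiar $(\prod_{i=0}^{K}\frac{g_i^*}{g_i})(Y-\bar Q(\bar L(K),\bar A(K)))$ term and $j=0$ the $\bar Q_{L(0)}-\Psi(P)$ term). I expect the main obstacle to be exactly this middle step: verifying carefully that the Fubini reordering collapses the future integral into precisely the nested regression $\bar Q_{L(j)}$, that the recursion invoked is the correct one, and that the time-indexing of which variables are conditioned upon is tracked consistently throughout. I would also flag that in the statement the arguments $\bar L(i),\bar A(i-1)$ of $\bar Q_{L(j)}$ (and the index on $\bar L(i-1)$) appear to be intended as $\bar L(j),\bar A(j-1)$ and $\bar L(j-1)$.
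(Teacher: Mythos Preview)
Your proposal is correct and follows essentially the same route as the paper: write $\Psi(P_\epsilon)$ with only the $p_{L(j),\epsilon}$ perturbed, apply the product rule to obtain a sum over $j$, invoke the key identity (\ref{eq:1}) on the perturbed factor, collapse the ``future'' integral into the sequential regression $\bar Q_{L(j)}$, and multiply--divide by $\prod_{i<j}g_i$ to rebuild $p(o)$ and read off the Riesz representer. The only cosmetic difference is ordering---you integrate out the future to form $\bar Q_{L(j)}$ \emph{before} applying (\ref{eq:1}), whereas the paper applies (\ref{eq:1}) first and then integrates---but the computation is the same, and your flag that the indices $\bar L(i),\bar A(i-1),\bar L(i-1)$ in the statement should read $\bar L(j),\bar A(j-1),\bar L(j-1)$ is well taken.
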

where starting with $Y=\bar{Q}_{L(K+1)}$

we set  $\mathbb{E}_{P_{g^*}}\left[\bar{Q}_{L(K+1)}\mid\bar{L}(K),\bar{A}(K-1)\right]=\bar{Q}_{L(K)}(\bar{L}(K),\bar{A}(K-1))$

and we continue to recursively set 

$\bar{Q}_{L(i)}\left(\bar{L}(i),\bar{A}(i-1)\right)=\mathbb{E}_{P_{g^{*}}}\left[\bar{Q}_{L(i+1)}\mid\bar{L}(i),\bar{A}(i-1)\right]$

PROOF:

we have for a path, $P_\epsilon$ through $P$:

\[
\frac{d}{d\epsilon}\Psi(P_{\epsilon})\biggr\vert_{\epsilon=0}
 =\frac{d}{d\epsilon}\biggr\vert_{\epsilon=0}\int y\prod_{i=0}^{K+1}p_{L(i),\epsilon}\left(l(i)\mid\bar{a}(i-1),\bar{l}(i-1)\right)g_{i}^{*}\left(a(i)\mid\bar{a}(i-1),\bar{l}(i)\right)d\nu(o)
\]

\subsubsection{Regarding Semi-Parametric Models With Known Treatment Mechanism}
We already notice that we will have only parts of the score corresponding
to $p_{L(i),\epsilon}$ parts of the likelihood and not the treatment mechanism
since the parameter does not depend on these factors.  This will automatically make the efficient influence curve only in the part of the tangent space defined by the mean 0 functions of $L(i)$ given the past, i.e., the efficient influence curve will only have components of the form $\frac{d}{d\epsilon} log p_{L(i),\epsilon}\biggr\vert_{\epsilon=0}$ and thus, since these components are orthogonal to the mean 0 functions of $A(i)$ given the past, i.e.$\frac{d}{d\epsilon} log p_{A(i),\epsilon}\biggr\vert_{\epsilon=0}$, the efficient influence curve for the model with known treatment mechanism will be the same as for the non-parametric model.   Now we can shorten things
with subscripts indicative of the variable the conditional probabilities
are functions of.  We can notice that (\ref{eq:1}) implies 

\begin{scriptsize}
\[
\frac{d}{d\epsilon}p_{L(i),\epsilon}(l(i) \mid \bar{a}(i-1), \bar{l}(i-1))\biggr\vert_{\epsilon=0} = p_{L(i)}(l(i) \mid \bar{a}(i-1), \bar{l}(i-1))*(\mathbb{E}\left[S(O)\mid\bar{a}(j-1),\bar{l}(j)\right]-\mathbb{E}\left[S(O)\mid\bar{a}(j-1),\bar{l}(j-1)\right])
\]
\begin{align*}
 & \frac{d}{d\epsilon}\biggr\vert_{\epsilon=0}\int y\prod_{i=0}^{K+1}p_{L(i),\epsilon}\left(l(i)\mid\bar{a}(i-1),\bar{l}(i-1)\right)\prod_{i=0}^{K}g_{A(i)}^*\left(a(i)\mid\bar{a}(i-1),\bar{l}(i)\right)d\nu(o)\\
\overset{(\ref{eq:1})}{=} & \sum_{j=0}^{K+1}\int y\prod_{i=j}^{K+1}p_{L(i)}(l(i) \mid \bar{a}(i-1), \bar{l}(i-1))g_{A(i-1)}^{*}(a(i-1) \mid \bar{a}(i-2), \bar{l}(i-1))\mathbb{E}\left[S(O)\mid\bar{a}(j-1),\bar{l}(j)\right]\\  
&\int \prod_{i=0}^{j-1}p_{L(i)}(l(i) \mid \bar{a}(i-1), \bar{l}(i-1))g_{A(i)}^*(a(i) \mid \bar{a}(i-1), \bar{l}(i))d\nu(o)\\
 & -\sum_{j=0}^{K+1}\int y\prod_{i=j}^{K+1}p_{L(i)}(l(i) \mid \bar{a}(i-1), \bar{l}(i-1))g_{A(i-1)}^{*}(a(i-1) \mid \bar{a}(i-2), \bar{l}(i-1))\mathbb{E}\left[S(O)\mid\bar{a}(j-1),\bar{l}(j-1)\right]\\
 &\prod_{i=0}^{j-1}p_{L(i)}(l(i) \mid \bar{a}(i-1), \bar{l}(i-1))g_{A(i)}^*(a(i) \mid \bar{a}(i-1), \bar{l}(i))d\nu(o)\\
= & \int \sum_{j=0}^{K+1}\bar{Q}_{L(j)}(\bar{l}(j),\bar{a}(j-1)) (\bar{l}(j), \bar{a}(j-1)) p_{L(j)}(l(j) \mid \bar{a}(j-1), \bar{l}(j-1))\mathbb{E}_P\left[S(O)\mid\bar{a}(j-1),\bar{l}(j)\right]\\
&\prod_{i=0}^{j-1}p_{L(i)}(l(i) \mid \bar{a}(i-1), \bar{l}(i-1))g_{A(i)}^*(a(i) \mid \bar{a}(i-1), \bar{l}(i))d\nu(o)\\
 & -\sum_{j=0}^{K+1}\int \mathbb{E}_{P_{L(j)}}[\bar{Q}_{L(j)}(\bar{l}(j),\bar{a}(j-1)) \mid \bar{l}(i-1),\bar{a}(i-1)] \mathbb{E}_{P_{g^*}}\left[S(O)\mid\bar{a}(j-1),\bar{l}(j-1)\right]\\
 &\prod_{i=0}^{j-1}p_{L(i)}(l(i) \mid \bar{a}(i-1), \bar{l}(i-1))g_{A(i)}^*(a(i) \mid \bar{a}(i-1), \bar{l}(i))d\nu(o)\\
\overset{fubini}{=} & \sum_{j=0}^{K+1}\int \bar{Q}_{L(j)}(\bar{l}(j),\bar{a}(j-1)) (\bar{l}(j), \bar{a}(j-1)) p_{L(j)}(l(j) \mid \bar{a}(j-1), \bar{l}(j-1))\mathbb{E}_P\left[S(O)\mid\bar{a}(j-1),\bar{l}(j)\right]\\
&\prod_{i=0}^{j-1}p_{L(i)}(l(i) \mid \bar{a}(i-1), \bar{l}(i-1))g_{A(i)}^*\frac{g_{A(i)}}{g_{A(i)}}(a(i) \mid \bar{a}(i-1), \bar{l}(i))d\nu(o)\\
 & -\sum_{j=0}^{K+1}\int \mathbb{E}_{P_{L(j)}}[\bar{Q}_{L(j)}(\bar{l}(j),\bar{a}(j-1)) \mid \bar{l}(i-1),\bar{a}(i-1)] \mathbb{E}_{P_{g^*}}\left[S(O)\mid\bar{a}(j-1),\bar{l}(j-1)\right]\\
 &\prod_{i=0}^{j-1}p_{L(i)}(l(i) \mid \bar{a}(i-1), \bar{l}(i-1))g_{A(i)}^*\frac{g_{A(i)}}{g_{A(i)}}(A(i) \mid \bar{a}(i-1), \bar{l}(i))d\nu(o)\\
\overset{fubini}{=} & \sum_{j=0}^{K+1}\mathbb{E}\left(\prod_{i=0}^{j-1}\frac{g_{A(i)}^*}{g_{A(i)}}(A(i) \mid \bar{A}(i-1), \bar{L}(i))\right)\left(\bar{Q}_{L(j)}\left(\bar{L}(j),\bar{A}(j-1)\right)-\mathbb{E}_P[\bar{Q}_{L(j)}\mid \bar{L}(j-1),\bar{A}(j-1)]\right)S(O)\\
= &\biggr\langle\sum_{j=0}^{K+1}\left(\prod_{i=0}^{j-1}\frac{g_{A(i)}^*}{g_{A(i)}}\right)\left(\bar{Q}_{L(j)}-\mathbb{E}_P[\bar{Q}_{L(j)}\mid \cdot, \cdot]\right),S\biggr\rangle_{L_{0}^{2}(P)}
\end{align*}
\end{scriptsize}

And the proof is complete by the riesz representation theorem.  We see the unique representer, i.e., the efficient influence curve, is given by the formula:
\begin{scriptsize}
\[
\sum_{j=0}^{K+1}\left(\prod_{i=0}^{j-1}\frac{g_{A(i)}^*}{g_{A(i)}}(A(i) \mid \bar{A}(i-1), \bar{L}(i))\right)\left(\bar{Q}_{L(j)}\left(\bar{L}(j),\bar{A}(j-1)\right)-\mathbb{E}_P[\bar{Q}_{L(j)}\mid \bar{L}(j-1),\bar{A}(j-1)]\right)
\]
\end{scriptsize}

\subsection{Example 10: Survival Under a Dynamic Rule}
We can also perform a similar analysis with right censored survival
data.  In this case, we observe an event time, $\tilde{T} = min(C, T)$ and $\Delta$ where $\Delta=1$ indicates the death was observed, i.e., that $\tilde{T} = T$.  Otherwise we observe the censoring time, $C$.  We also have observed confounders, $W$, and a treatment assignment, $A$, given at baseline.  Thus our observed data is of the form:
\[
O = (A, W,\tilde{T}, \Delta) \sim \mathcal{M}, \text{ non-parametric}
\]
Our parameter mapping is defined as 
\[
\Psi(P)=\mathbb{E}\prod_{t=0}^{t_{0}}\left(1-\mathbb{E}_{P}\left[dN(t)\mid A = d(W),W,N(t-1)=A_{2}(t-1)=0\right]\right)
\]
where $A_{2}(t)$ indicates whether the subject was censored at time $t$ or before and $N(t)$ is an indicator of whether the subject has died or not.  The ordering of the variables is as follows for some discretization of time which, WLOG, we just set to $0,1,2,...etc$ of time: $W=$ confounders, $A=$ treatment assignment, $A_2(0)=$ indicator of censoring in which case, C = $1$, $dN(1) =$ indicator of failure at time $1$, $A_2(1)$, then $dN(2)$, $A_2(2)$ ,etc.  We note that this is an alternate form of the observed data structure for discretized time\\

To place this in the framework of our general method, we can notice we have conditional densities of death, given the past.  Define, $dN(t)$ as the indicator of death at time $t$.  Then the conditional density of death at time, $t$, given the past is denoted $p_{dN(t)}$.  Therefore, by (\ref{eq:1}) we get 

\begin{scriptsize}
\begin{align}
\frac{d}{d\epsilon}p_{dN(i), \epsilon}(dN(t)\mid N(t)&=0, A, W) \biggr\vert_{\epsilon=0} = p_{dN(i)}(dN(t) \mid pa(A_2(t)))*(\mathbb{E}\left[S(O)\mid pa(A_2(t))\right]-\mathbb{E}\left[S(O)\mid pa(dN(t))\right]) \label{eq:1.21}\\
\text{and }
\frac{d}{d\epsilon}p_{W, \epsilon}(w) \biggr\vert_{\epsilon=0} & = p_{W}(w)*(\mathbb{E}\left[S(O)\mid w \right]-\mathbb{E}S(O)) \label{eq:1.22}
\end{align}
\end{scriptsize}

where $pa(A_2(t))$ are all the preceding variables to the censoring mechanism at time, $t$, including $dN(t)$ 
Using the same principles as previously described
we can differentiate the parameter mapping along a path defined by
the score, $S,$ at the truth, $P$, as follows. We will proceed by differentiating the parameter mapping as in the previous section and once we have written the derivative as an inner product of a function with the score, that function will be our efficient influence curve.  We note $s_c(t \mid A, W)$ is the probability of being censored after time, $t-1$, having received treatment $A$ and with confounders, $W$. $s(t\mid A,W)$ is the conditional probability of survival past time $t$, given $A$ and $W$.  We note to the reader that survival estimates can be obtained for $s_c$ from those who were censored at the various time points, such as with a pooled logistic regression where all participants contribute a line of data for each time point they are uncensored and a time for each of those lines of data.  Similarly we can get estimates of the conditional survival hazard, $\lambda(\cdot \mid A, W)$.  The regressions are then fit and we can estimate the probability of being censoring beyond time, $t$, as $s_c(t \mid A, W) = \prod_{c=0}^t (1 - \lambda_C(c \mid A, W))$ where our regression estimates  $\lambda(c \mid A, W)$ for all of the discrete times, $c$.  

\begin{thm}
The efficient influence curve for $\Psi(P)$ is 
\begin{scriptsize}
\[
D^*(A, W, \tilde{T}, \Delta) = \left[\sum_{t=1}^{t_{0}}\frac{I(A=d(W))I(\tilde{T}>t-1)s(t_{0}\mid A,W)}{g(A\mid W)s_{c}(t-1\vert A,W)s(t\mid A,W)}\times\left(dN(t)-\lambda(t\mid A,W)\right)+s(t_{0}\mid A=d(W),W)-\Psi(P)\right]
\] 
\end{scriptsize}
\end{thm}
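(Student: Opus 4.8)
The plan is to follow the general recipe of section \ref{generalapproach}: express $\Psi(P_\epsilon)$ as an integral, differentiate at $\epsilon=0$ along a path in a score direction $S$, rewrite every derivative of a conditional density via the key identity (\ref{eq:1}) so that $S$ enters linearly, reverse the order of integration so that the full density $p(o)$ sits under the integral, and then read off the Riesz representer. First I would put the parameter into product-of-hazards form. Writing $\lambda(t\mid A,W)=\mathbb{E}_P[dN(t)\mid N(t-1)=A_2(t-1)=0,A,W]$ for the discrete hazard, each factor of the product evaluated at $A=d(W)$ is $1-\lambda(t\mid d(W),W)$ (the $t=0$ factor being trivially $1$), so $s(t_0\mid d(W),W)=\prod_{t=1}^{t_0}(1-\lambda(t\mid d(W),W))$ and $\Psi(P)=\int s(t_0\mid d(w),w)\,p_W(w)\,d\nu(w)$. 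Since the mapping depends only on $p_W$ and the death-hazard densities $p_{dN(t)}$, the orthogonality of $T_A$ and the $T_{A_2(t)}$ to the remaining tangent subspaces guarantees the efficient influence curve lies in $T_W\oplus T_{dN(1)}\oplus\cdots\oplus T_{dN(t_0)}$; only the derivatives of $p_{W,\epsilon}$ and the $p_{dN(t),\epsilon}$ survive.

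Next I would differentiate. The product/chain rule yields one term from $\frac{d}{d\epsilon}p_{W,\epsilon}$ and $t_0$ terms, the $t$-th appearing as $\bigl(\prod_{u\neq t}(1-\lambda(u\mid d(w),w))\bigr)$ times $-\frac{d}{d\epsilon}\lambda_\epsilon(t\mid d(w),w)\bigr|_{\epsilon=0}$. For the $W$-term, (\ref{eq:1.22}) gives $\int s(t_0\mid d(w),w)\,(\mathbb{E}[S(O)\mid w]-\mathbb{E}S(O))\,p_W(w)\,d\nu(w)$, and since the bracket is a function of $W$ alone, reversing the integration order produces the $L^2_0(P)$ inner product of $S$ with $s(t_0\mid d(W),W)-\Psi(P)$, the $T_W$-component. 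For the $t$-th hazard term I would write $\lambda_\epsilon(t\mid d(w),w)=\int dN(t)\,p_{dN(t),\epsilon}(dN(t)\mid N(t-1)=A_2(t-1)=0,d(w),w)\,d\nu(dN(t))$, differentiate under the integral, and apply (\ref{eq:1.21}); using iterated expectations together with $\int dN(t)\,p_{dN(t)}(\cdot)\,d\nu(dN(t))=\lambda(t\mid d(w),w)$ collapses the result to a conditional expectation of $(dN(t)-\lambda(t\mid d(w),w))\,S(O)$ given the variables preceding $dN(t)$, with $A$ set to $d(w)$.

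The last and hardest step is to convert these conditional expectations into honest $L^2_0(P)$ inner products, i.e.\ integrals against the full likelihood $p(o)$. Following the tricks of section \ref{integration} (item \ref{tricks}), I would insert $\frac{\mathbb{I}(a=d(w))\,g(a\mid w)}{g(a\mid w)}$ to recover the treatment factor and a censoring-survival factor $s_c(t-1\mid a,w)=\prod_{c=0}^{t-1}(1-\lambda_C(c\mid a,w))$ in numerator and denominator; this lets the coarsened at-risk indicator $\mathbb{I}(\tilde T>t-1)=\mathbb{I}(N(t-1)=A_2(t-1)=0)$ appear and rebuilds the full density under the integral (the projection formula of item \ref{projections} organizes the conditioning sets here). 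Telescoping the hazard product, $\prod_{u\neq t}(1-\lambda(u\mid d(w),w))=s(t-1\mid d(w),w)\,s(t_0\mid d(w),w)/s(t\mid d(w),w)$, and observing that the $s(t-1\mid d(W),W)$ factor is exactly the probability of being at risk at $t-1$ under the rule --- the quantity that the weight $\mathbb{I}(A=d(W))\,\mathbb{I}(\tilde T>t-1)/(g(A\mid W)\,s_c(t-1\mid A,W))$ accounts for --- leaves the coefficient $\frac{\mathbb{I}(A=d(W))\,\mathbb{I}(\tilde T>t-1)\,s(t_0\mid A,W)}{g(A\mid W)\,s_c(t-1\mid A,W)\,s(t\mid A,W)}$ multiplying $(dN(t)-\lambda(t\mid A,W))$. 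Summing the $t_0$ hazard terms with the $W$-term and invoking the Riesz representation theorem as in section \ref{generalapproach} delivers the stated $D^*$. The main obstacle I anticipate is precisely this bookkeeping: keeping the conditioning sets consistent across the coarsening, getting the signs right (the $-\frac{d}{d\epsilon}\lambda_\epsilon$ must recombine into $+(dN(t)-\lambda)$ after the rearrangement into an inner product), and checking that the positivity and censoring-survival weights are bounded away from zero so that the pathwise derivative is a bounded linear functional.
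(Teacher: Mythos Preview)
Your proposal is correct and follows essentially the same route as the paper: differentiate along a path, separate the $p_W$ contribution from the $t_0$ hazard contributions via the product rule, apply the key identity (\ref{eq:1}) in the forms (\ref{eq:1.21}) and (\ref{eq:1.22}), insert the indicator-over-propensity and censoring-survival factors to rebuild the full density, and read off the Riesz representer. Your explicit telescoping identity $\prod_{u\neq t}(1-\lambda(u))=s(t-1)\,s(t_0)/s(t)$ and the observation that the $s(t-1)$ factor cancels against the at-risk probability introduced when converting to an integral against $p(o)$ are exactly the bookkeeping the paper carries out line by line, just stated more compactly.
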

PROOF:

\begin{scriptsize}
\begin{align*}
\frac{d}{d\epsilon}\biggr\vert_{\epsilon=0}\Psi(P_{\epsilon})= & \mathbb{E}_{w}\frac{d}{d\epsilon}\biggr\vert_{\epsilon=0}\prod_{t=1}^{t_{0}}\left(1-\mathbb{E}_{P_{\epsilon}}\left[dN(t)\mid A=d(W),W,N(t-1)=A_{2}(t-1)=0\right]\right)+\\
 &\frac{d}{d\epsilon}\biggr\vert_{\epsilon=0} \mathbb{E}_{P_{W,\epsilon}}\prod_{t=0}^{t_{0}}\left(1-\mathbb{E}_{P}\left[dN(t)\mid A = d(W),W,N(t-1)=A_{2}(t-1)=0\right]\right)\\
= & \mathbb{E}\frac{d}{d\epsilon}\biggr\vert_{\epsilon=0}\prod_{t=1}^{t_{0}}\left(1-\mathbb{E}_{P_{\epsilon}}\left[dN(t)\mid A=d(W),W,N(t-1)=A_{2}(t-1)=0\right]\right)+\\
 & \int \prod_{t=0}^{t_{0}}\left(1-\mathbb{E}_{P}\left[dN(t)\mid a = d(w),w,n(t-1)=a_{2}(t-1)=0\right]\right)\frac{d}{d\epsilon}\biggr\vert_{\epsilon=0}p_{W,\epsilon}d\nu(w)\\
= & \int\sum_{t=1}^{t_{0}}\prod_{i\neq t}^{t_{0}}\left(1-\mathbb{E}_{P}\left[dN(i)\mid A=d(W),W,N(i-1)=A_{2}(i-1)=0\right]\right)\times\\
 & \frac{d}{d\epsilon}\biggr\vert_{\epsilon=0}\mathbb{E}_{P_{\epsilon}}\left[dN(t)\mid A=d(W),W,N(t-1)=A_{2}(t-1)=0\right]+\\
 & \text{(from (\ref{eq:1.22}))}\\
 & \int \prod_{t=0}^{t_{0}}\left(1-\mathbb{E}_{P}\left[dN(t)\mid a = d(w),w,n(t-1)=a_{2}(t-1)=0\right]\right)\frac{d}{d\epsilon}\biggr\vert_{\epsilon=0}(\mathbb{E}\left[S(O)\mid w \right]-\mathbb{E}S(O))p_{W}d\nu(w)\\
=&\int  \sum_{t=1}^{t_{0}}\frac{\prod_{i=1}^{t_{0}}\left(1-\mathbb{E}_{P}\left[dN(t)\mid A=d(w),w,N(i-1)=A_{2}(i-1)=0\right]\right)}{1-\lambda(t\mid A=d(W),W)}\times\\
 & \frac{d}{d\epsilon}\biggr\vert_{\epsilon=0}\int dn(t)p_{dN(\tau)\epsilon}(dn(t)\mid A=d(w),W=w,n(t-1)=a_{2}(t-1)=0)\\
 & d\nu(dn(t))d\nu(w)+ \mathbb{E}\left[s(O)\prod_{t=0}^{t_{0}}\left(1-\mathbb{E}_{P}\left[dN(t)\mid a = d(w),w,n(t-1)=a_{2}(t-1)=0\right]\right)-\Psi(P)\right]\\
=&\int  \sum_{t=1}^{t_{0}}\frac{S(t\mid A-d(W),W)}{1-\lambda(t\mid A=d(W),W)}\times\\
 & \frac{d}{d\epsilon}\biggr\vert_{\epsilon=0}\int dn(t)\frac{I(a=d(w))I(N(i-1)=A_{2}(i-1)=0)}{g(a\mid w)\prod_{i=0}g_{A_{2}(i)}(a_{2}(i)\mid pa(a_{2}(i))\prod_{i=1}^{t-1}p_{dN(i)}(dn(i)\mid pa(dn(i))}\times\\
 & p_{dN(t)\epsilon}(dn(t)\mid pa(dn(t))\mid pa(dn(i))\\
 & \times\prod_{i=0}^{t-1}g_{A_{2}(i)}(a_{2}(i)\mid pa(a_{2}(i))g(a\mid w)p_{W}(w)d\nu(o)\\
 & +\mathbb{E}\left[s(O)S(t_{0}\mid a=d(W),W)-\Psi(P)\right]\\
 \overset{\ref{eq:1.21}}{=} & \int \sum_{t=1}^{t_{0}}\frac{S(t\mid a=d(w),w)}{1-\lambda(t\mid a=d(w),w)}dn(t)\frac{I(a=d(w))I(N(i-1)=A_{2}(i-1)=0)}{g(a\mid w)S_{c}(t-1\mid a,w)S(t-1\mid a,w)}\times\\
 & \left(\mathbb{E}_{P}\left[S\mid pa(a_{2}(t))\right]-\mathbb{E}_{P}\left[S\mid pa(dn(t))\right]\right)p_{dN(\tau)}(dn(t)\mid pa(dn(t))\times\\
 & \times\prod_{k=1}^{t-1}p_{dN(i)}(dn(k)\mid pa(dn(k))\times\prod_{k=0}^{t-1}g_{A_{2}(k)}(a_{2}(k)\mid pa(a_{2}(k))g(a\mid w)p_{W}(w)d\nu(o)+\\
 & \mathbb{E}\left[s(O)S(t_{0}\mid a=d(W),W)-\Psi(P)\right]\\
= & \int\sum_{t=1}^{t_{0}}\frac{I(a=d(w))I(N(i-1)=A_{2}(i-1)=0)S(t_{0}\mid a,w)}{g(a\mid w)S_{c}(t-1\vert a,w)S(t\mid a,w)}\\
&\biggr(dn(t)\mathbb{E}_{P}\biggr[S\mid pa(a_{2}(t))\biggr]p_{dN(\tau)}(dn(t)\mid pa(dn(t))-\lambda(t\mid a,w) \mathbb{E}_{P}\biggr[S\mid pa(dn(t))\biggr]\biggr)\\
&\prod_{k=1}^{t-1}p_{dN(i)}(dn(k)\mid pa(dn(k))\times\prod_{k=0}^{t-1}g_{A_{2}(k)}(a_{2}(k)\mid pa(a_{2}(k))g(a\mid w)p_{W}(w)d\nu(o)+\\
 & \mathbb{E}\left[s(O)S(t_{0}\mid a=d(W),W)-\Psi(P)\right])\\
\overset{fubini}{=} & \int\sum_{t=1}^{t_{0}}\frac{I(a=d(w))I(N(i-1)=A_{2}(i-1)=0)S(t_{0}\mid a,w)}{g(a\mid w)S_{c}(t-1\vert a,w)S(t\mid a,w)}\times\left(dn(t)-\lambda(t\mid a,w)\right)s(o)p(o)d\nu(o)+\\
 & \mathbb{E}\left[s(O)S(t_{0}\mid a=d(W),W)-\Psi(P)\right]\\
= & \mathbb{E}\biggr[s(O)\biggr[\sum_{t=1}^{t_{0}}\frac{I(A=d(W))I(N(i-1)=A_{2}(i-1)=0)S(t_{0}\mid A,W)}{g(A\mid W)S_{c}(t-1\vert A,W)S(t\mid A,W)}\times\biggr(dN(t)-\lambda(t\mid A,W)\biggr)+\\
& S(t_{0}\mid A=d(W),W)-\Psi(P)\biggr]\biggr]\\
= & \biggr\langle\biggr[\sum_{t=1}^{t_{0}}\frac{I(A=d(W))I(\tilde{T}>t-1)S(t_{0}\mid A,W)}{g(A\mid W)S_{c}(t-1\vert A,W)S(t\mid A,W)}\times\biggr(I(\tilde{T}=t)-\lambda(t\mid A,W)\biggr)\\
& +S(t_{0}\mid A=d(W),W)-\Psi(P)\biggr],S(O)\biggr\rangle_{L_{0}^{2}(P)}
\end{align*}
\end{scriptsize}
And we can see the influence curve in the inner product with the score and the proof is complete.  Note, that we can replace $dN(t)$ with $I(\tilde{T}=t)$ because either time, $t$, is a censored time or the term is 0.  Also, by definition of $\tilde{T}$, $I(N(i-1)=A_{2}(i-1)=0) = I(\tilde{T} > t-1)$
\nocite{Laan:2011aa}

\clearpage
\printbibliography
\end{document}